\newtheorem{thm}{Theorem}[section]
\newtheorem{lem}[thm]{Lemma}
\newtheorem{propo}[thm]{Proposition}
\theoremstyle{definition}
\theoremstyle{remark}
\title[The Riesz transform]{Discrete harmonic analysis \\associated with Jacobi expansions II: \\the Riesz transform}
\author[A. Arenas]{Alberto Arenas}
\address{Departamento de Matem\'aticas y Computaci\'on,
Universidad de La Rioja, Complejo Cient\'{\i}fico-Tecnol\'ogico,
Calle Madre de Dios 53, 26006 Logro\~no, Spain}
\email{alberto.arenas@unirioja.es}
\author[\'O. Ciaurri]{\'Oscar Ciaurri}
\address{Departamento de Matem\'aticas y Computaci\'on,
Universidad de La Rioja, Complejo Cient\'{\i}fico-Tecnol\'ogico,
Calle MAdre de Dios 53, 26006 Logro\~no, Spain}
\email{oscar.ciaurri@unirioja.es}
\author[E. Labarga]{Edgar Labarga}
\address{Departamento de Matem\'aticas y Computaci\'on,
Universidad de La Rioja, Complejo Cient\'{\i}fico-Tecnol\'ogico,
Calle Madre de Dios 53, 26006 Logro\~no, Spain}
\email{edgar.labarga@unirioja.es}
\keywords{Discrete harmonic analysis, Jacobi polynomials, Riesz transform, weighted norm inequalities, discrete Calder\'{o}n-Zygmund theory}
\subjclass[2010]{Primary: 42C10.}
\thanks{The first-named author was supported by a predoctoral research grant of the Government of Comunidad Aut\'{o}noma de La Rioja. The second-named author was supported by grant MTM2015-65888-C04-4-P MINECO/FEDER, UE, from Spanish Government. The third-named author was supported by a predoctoral research grant of the University of La Rioja.}
\begin{document}
%%%%%%%%%%%%%%%%%%%%%%%%%%%%%%%%%%%%%%%%%%%%%%%%%%%

%%%%%%%%%%%%%%%%%%%%%%%%%%%%%%%%%%%%%%%%%%%%%%%%%%%%%%
\begin{abstract}
This paper is the continuation of the study on discrete harmonic analysis related to Jacobi expansions initiated in \cite{ACL-JacI}. Considering the operator $\mathcal{J}^{(\alpha,\beta)}=J^{(\alpha,\beta)}-I$, where $J^{(\alpha,\beta)}$ is the three-term recurrence relation for the normalized Jacobi polynomials and $I$ is the identity operator, we focus on the study of weighted inequalities for the Riesz transform associated with it.
\end{abstract}
%%%%%%%%%%%%%%%%%%%%%%%%%%%%%%%%%%%%%%%%%%%%%%%%%%%%%%

\maketitle

%%%%%%%%%%%%%%%%%%%%%%%%%%%%%%%%%%%%%%%%%%%%%%%%%%%%%%
\section{Introduction}
%%%%%%%%%%%%%%%%%%%%%%%%%%%%%%%%%%%%%%%%%%%%%%%%%%%%%%

For $\alpha,\beta>-1$ and $n=0,1,2,\dots$, we consider the sequences $\{a_{n}^{(\alpha,\beta)}\}_{n\in\mathbb{N}}$ and $\{b_{n}^{(\alpha,\beta)}\}_{n\in\mathbb{N}}$ given by
\[
a_n^{(\alpha,\beta)}=\frac{2}{2n+\alpha+\beta+2}
\sqrt{\frac{(n+1)(n+\alpha+1)(n+\beta+1)(n+\alpha+\beta+1)}{(2n+\alpha+\beta+1)(2n+\alpha+\beta+3)}},\quad n\geq 1,
\]
\[
a_{0}^{(\alpha,\beta)} = \frac{2}{\alpha+\beta+2}\sqrt{\frac{(\alpha+1)(\beta+1)}{(\alpha+\beta+3)}},
\]
\[
b_n^{(\alpha,\beta)}=\frac{\beta^2-\alpha^2}{(2n+\alpha+\beta)(2n+\alpha+\beta+2)},\quad n\geq 1,
\]
and
\[
b_{0}^{(\alpha,\beta)} = \frac{\beta-\alpha}{\alpha+\beta+2}.
\]
Then, for any given sequence $\{f(n)\}_{n\ge 0}$, we define $\{J^{(\alpha,\beta)}f(n)\}_{n\ge 0}$ by the relations
\[
J^{(\alpha,\beta)}f(n)=a_{n-1}^{(\alpha,\beta)}f(n-1)+b_n^{(\alpha,\beta)}f(n)+ a_{n}^{(\alpha,\beta)}f(n+1), \qquad n\ge 1,
\]
and $J^{(\alpha,\beta)}f(0)=b_0^{(\alpha,\beta)}f(0)+ a_{0}^{(\alpha,\beta)}f(1)$.

Note that the sequences $\{a_n^{(\alpha,\beta)}\}_{n\ge 0}$ and $\{b_n^{(\alpha,\beta)}\}_{n\ge 0}$ are the ones involved in the three-term recurrence relation for the normalized Jacobi polynomials. By using the Rodrigues' formula (see \cite[p.~67, eq.~(4.3.1)]{Szego}), the Jacobi polynomials $P^{(\alpha,\beta)}_n(x)$, $n\ge 0$, are defined as
\[
(1-x)^{\alpha}(1+x)^{\beta}P_n^{(\alpha,\beta)}(x)=\frac{(-1)^n}{2^n \, n!}\frac{d^n}{dx^n}\left((1-x)^{\alpha+n}(1+x)^{\beta+n}\right).
\]
They are orthogonal on the interval $[-1,1]$ with respect to the measure
\[
d\mu_{\alpha,\beta}(x)=(1-x)^\alpha(1+x)^{\beta}\,dx.
\]
The family $\{p_n^{(\alpha,\beta)}(x)\}_{n\ge 0}$, given by $p_n^{(\alpha,\beta)}(x)=w_n^{(\alpha,\beta)}P_n^{(\alpha,\beta)}(x)$, where
\begin{equation*}
\begin{aligned}
w_n^{(\alpha,\beta)}& = \frac{1}{\|P_n^{(\alpha,\beta)}\|_{L^2((-1,1),d\mu_{\alpha,\beta})}} \\&= \sqrt{\frac{(2n+\alpha+\beta+1)\, n!\,\Gamma(n+\alpha+\beta+1)}{2^{\alpha+\beta+1}\Gamma(n+\alpha+1)\,\Gamma(n+\beta+1)}},\quad n\geq1,
\end{aligned}
\end{equation*}
and
\[
w_{0}^{(\alpha,\beta)} = \frac{1}{\|P_{0}^{(\alpha,\beta)}\|_{L^2((-1,1),d\mu_{\alpha,\beta})}} = \sqrt{\frac{\Gamma(\alpha+\beta+2)}{2^{\alpha+\beta+1}\Gamma(\alpha+1)\Gamma(\beta+1)}},
\]
is a complete orthonormal system in the space $L^2([-1,1],d\mu_{\alpha,\beta})$. Furthermore, we have that
\[
J^{(\alpha,\beta)}p^{(\alpha,\beta)}_n(x)=xp_n^{(\alpha,\beta)}(x),\qquad x\in [-1,1].
\]

Throughout this paper we will work with the operator
\[
\mathcal{J}^{(\alpha,\beta)}f(n)=(J^{(\alpha,\beta)}-I)f(n),
\]
where $I$ denotes the identity operator, instead of $J^{(\alpha,\beta)}$. Due to this translation by the identity $I$, the operator  $-\mathcal{J}^{(\alpha,\beta)}$ is nonnegative and its spectrum is the interval $[0,2]$.

In this paper we continue the study of the discrete harmonic analysis associated with $\mathcal{J}^{(\alpha,\beta)}$ initiated in \cite{ACL-JacI}, where the heat semigroup was exhaustively analyzed. Our work on these kind of problems pretends to be an extension of the research done in   \cite{Ciau-et-al} for the discrete Laplacian
\begin{equation}
\label{eq:dis-lap}
\Delta_d f(n)=f(n-1)-2f(n)+f(n+1)
\end{equation}
and in \cite{Bet-et-al} for ultraspherical expansions, which corresponds with the case $\alpha=\beta=\lambda-1/2$ of $J^{(\alpha,\beta)}$.
Our target here is the study of a classical operator on harmonic analysis: the Riesz transform. For $\Delta_d$ this operator corresponds with classical discrete Hilbert transform and it was analyzed in \cite{Ciau-et-al}. For ultraspherical expansions this operator has not been treated yet, so our result is completely new even in that particular case.

Although the powers of $\mathcal{J}^{(\alpha,\beta)}$ will be studied deeply in a forthcoming paper, we have to state them at this point in order to define the Riesz transform. For our present purpose, it is enough to say that the fractional integrals (also known as negative powers) of $\mathcal{J}^{(\alpha,\beta)}$ are defined, for an appropriate sequence $\{f(n)\}_{n\ge 0}$, by
\[
(-\mathcal{J}^{(\alpha,\beta)})^{-\sigma}f(n)=\frac{1}{\Gamma(\sigma)}\int_{0}^{\infty}W_t^{(\alpha,\beta)}f(n)\,\frac{dt}{t^{1-\sigma}}, \qquad \sigma>0,
\]
where
\[
W_t^{(\alpha,\beta)}f(n)=\sum_{m=0}^\infty f(m)K_t^{(\alpha,\beta)}(m,n),
\]
is the heat semigroup associated to $\mathcal{J}^{(\alpha,\beta)}$ (see \cite{ACL-JacI}), whose kernel is
\[
K_t^{(\alpha,\beta)}(m,n)=\int_{-1}^{1}e^{-(1-x)t}p_m^{(\alpha,\beta)}(x)p_n^{(\alpha,\beta)}(x)\, d\mu_{\alpha,\beta}(x).
\]
As we will show in Proposition \ref{prop:well-def} below, for $\alpha,\beta\geq -1/2$, the operator $(-\mathcal{J}^{(\alpha,\beta)})^{-\sigma}$ is only well defined for $0<\sigma <1/2$.

We have that (see \cite[Section 3.1]{ACL-JacI})
\begin{equation*}
\mathcal{J}^{(\alpha,\beta)} = -\delta^{\star}\delta,
\end{equation*}
where
\begin{equation*}
\delta f(n) = d_{n}f(n) - e_{n}f(n+1),\quad n\geq 0,
\end{equation*}
\begin{equation*}
\delta^{\star} f(n) = d_n f(n) - e_{n-1}f(n-1),\quad n\geq 1,
\end{equation*}
and $\delta^{\star} f(0) = d_{0}f(0)$,
with the sequences $\{d_n\}_{n\ge 0}$ and $\{e_n\}_{n\ge 0}$ defined by $d_{0} = \sqrt{\frac{2(\alpha+1)}{\alpha+\beta+2}}$,
\begin{equation*}
d_{n} = \sqrt{\frac{2(n+\alpha+\beta+1)(n+\alpha+1)}{(2n+\alpha+\beta+1)(2n+\alpha+\beta+2)}},\quad n\geq 1,
\end{equation*}
and
\begin{equation*}
e_{n} = \sqrt{\frac{2(n+\beta+1)(n+1)}{(2n+\alpha+\beta+2)(2n+\alpha+\beta+3)}}\quad n\geq 0.
\end{equation*}
Note that, $\delta$ and $\delta^{\star}$ are adjoint operators in $\ell^2(\mathbb{N})$.

Following a standard procedure, for a given sequence $\{f(n)\}_{n\ge 0}$, the Riesz transform should be defined via composition by $\delta (-\mathcal{J}^{(\alpha,\beta)})^{-1/2}f(n)$. Unfortunately, this procedure does not work in our case because the operator $(-\mathcal{J}^{(\alpha,\beta)})^{-1/2}$ is not well defined so we need an alternative way to define the Riesz transform. In our situation, this operator is given by
\begin{equation}
\label{eq:Riesz-def}
\mathcal{R}f(n)=\lim_{\sigma\to \frac{1}{2}^{-}}\delta (-\mathcal{J}^{(\alpha,\beta)})^{-\sigma}f(n).
\end{equation}
This is a natural way to proceed and, in fact, it was used in \cite{Ciau-et-al} to define the Riesz transform for the discrete Laplacian \eqref{eq:dis-lap}. 

The Riesz transform is a classical operator in harmonic analysis and it has been analyzed in several settings. For example, the conjugate function and the Hilbert transform are the Riesz transform for the trigonometric Fourier series and for the one-dimensional Fourier transform, respectively, and both of them were analyzed by M. Riesz in his celebrated paper \cite{Riesz}. In the case of the $n$-dimensional Fourier transform the multiplier $\text{p.\,v.\,}\frac{x_j}{|x|^{n+1}}$ defines the $j$-th Riesz transform and such one is a prototype of singular integral. For non-trigonometric Fourier expansions this operator has been studied in many situations (see \cite{N-S} and the references therein). The Riesz transform has also been treated in very abstract settings as for example Riemannian manifolds or compact Lie groups.

In the main result of this paper we prove some weighted inequalities for $\mathcal{R}$. Before stated it, we need some preliminaries. A weight on $\mathbb{N}$ will be a strictly positive sequence $w=\{w(n)\}_{n\ge 0}$. We consider the weighted $\ell^{p}$-spaces
\[
\ell^p(\mathbb{N},w)=\left\{f=\{f(n)\}_{n\ge 0}: \|f\|_{\ell^{p}(\mathbb{N},w)}:=\Bigg(\sum_{m=0}^{\infty}|f(m)|^p w(m)\Bigg)^{1/p}<\infty\right\},
\]
$1\le p<\infty$, and the weak weighted $\ell^{1}$-space
\[
\ell^{1,\infty}(\mathbb{N},w)=\left\{f=\{f(n)\}_{n\ge 0}: \|f\|_{\ell^{1,\infty}(\mathbb{N},w)}:=\sup_{t>0}t\sum_{\{m\in \mathbb{N}: |f(m)|>t\}} w(m)<\infty\right\},
\]
and we simply write $\ell^p(\mathbb{N})$ and $\ell^{1,\infty}(\mathbb{N})$ when $w(n)=1$ for all $n\in \mathbb{N}$.

Furthermore, we say that a weight $w(n)$ belongs to the discrete Muckenhoupt $A_p(\mathbb{N})$ when
\[
\sup_{\begin{smallmatrix} 0\le n \le m \\ n,m\in \mathbb{N} \end{smallmatrix}} \frac{1}{(m-n+1)^p}\Bigg(\sum_{k=n}^mw(k)\Bigg)\Bigg(\sum_{k=n}^mw(k)^{-1/(p-1)}\Bigg)^{p-1} <\infty,
\]
for $1<p<\infty$,
\[
\sup_{\begin{smallmatrix} 0\le n \le m \\ n,m\in \mathbb{N} \end{smallmatrix}} \frac{1}{m-n+1}\Bigg(\sum_{k=n}^mw(k)\Bigg)\max_{n\le k \le m}w(k)^{-1} <\infty,
\]
for $p=1$.

\begin{thm}
\label{th:main}
Let $\alpha,\beta\ge -1/2$ and let $\mathcal{R}$ be the Riesz transform defined in \eqref{eq:Riesz-def}.
\begin{enumerate}
\item[(a)]
If $1<p<\infty$ and $w\in A_p(\mathbb{N})$, then
\begin{equation*}
%\label{eq:bound-trans}
\|\mathcal{R}f\|_{\ell^p(\mathbb{N},w)}\le C \|f\|_{\ell^p(\mathbb{N},w)}, \qquad f\in\ell^2(\mathbb{N})\cap\ell^{p}(\mathbb{N},w),
\end{equation*}
where $C$ is a constant independent of $f$. Consequently, the operator $\mathcal{R}$ extends uniquely to a bounded linear operator from $\ell^p(\mathbb{N},w)$ into itself.
\item[(b)]
If $w\in A_1(\mathbb{N})$, then
\begin{equation*}
%\label{eq:bound-trans-weak}
\|\mathcal{R}f\|_{\ell^{1,\infty}(\mathbb{N},w)}\le C \|f\|_{\ell^1(\mathbb{N},w)}, \qquad f\in \ell^2(\mathbb{N})\cap\ell^{1}(\mathbb{N},w),
\end{equation*}
where $C$ is a constant independent of $f$. Consequently, the operator $\mathcal{R}$ extends uniquely to a bounded linear operator from $\ell^1(\mathbb{N},w)$ into $\ell^{1,\infty}(\mathbb{N},w)$.
\end{enumerate}
\end{thm}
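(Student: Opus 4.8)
The strategy is to show that $\mathcal{R}$ is a Calderón–Zygmund operator on $\mathbb{N}$ endowed with the Euclidean distance and the counting measure — a space of homogeneous type whose balls are the discrete intervals — and then to invoke the weighted Calderón–Zygmund theory, exactly as was done for the discrete Hilbert transform in \cite{Ciau-et-al}. I would begin with the $\ell^2(\mathbb{N})$-boundedness of $\mathcal{R}$ and the existence of the limit in \eqref{eq:Riesz-def}, using the spectral theorem for $-\mathcal{J}^{(\alpha,\beta)}$ (whose spectrum is $[0,2]$), the factorisation $-\mathcal{J}^{(\alpha,\beta)}=\delta^{\star}\delta$, and the fact that $\delta,\delta^{\star}$ are mutually adjoint on $\ell^2(\mathbb{N})$: for $0<\sigma<1/2$ and $f\in\ell^2(\mathbb{N})$,
\[
\|\delta(-\mathcal{J}^{(\alpha,\beta)})^{-\sigma}f\|_{\ell^2(\mathbb{N})}^2
=\langle(-\mathcal{J}^{(\alpha,\beta)})^{1-2\sigma}f,f\rangle
=\int_{0}^{2}\lambda^{1-2\sigma}\,d\langle E_\lambda f,f\rangle,
\]
where $\{E_\lambda\}$ is the spectral resolution of $-\mathcal{J}^{(\alpha,\beta)}$. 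Since $0<1-2\sigma<1$ the integrand is bounded by $2^{1-2\sigma}$ and converges pointwise to $1$ as $\sigma\to1/2^{-}$, so dominated convergence gives $\|\mathcal{R}f\|_{\ell^2(\mathbb{N})}=\|f\|_{\ell^2(\mathbb{N})}$; applying the same identity to the differences $\delta\bigl((-\mathcal{J}^{(\alpha,\beta)})^{-\sigma}-(-\mathcal{J}^{(\alpha,\beta)})^{-\sigma'}\bigr)f$ shows that the family is Cauchy, so the limit in \eqref{eq:Riesz-def} exists in $\ell^2(\mathbb{N})$ and $\mathcal{R}$ is an isometry there.

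The technical heart is the kernel. Writing $(-\mathcal{J}^{(\alpha,\beta)})^{-\sigma}$ through the heat semigroup, interchanging $\delta$ with the $t$-integral and the sum (legitimate for finitely supported $f$ by the heat kernel estimates of \cite{ACL-JacI}) and letting $\sigma\to1/2^{-}$, I would obtain
\[
\mathcal{R}f(n)=\sum_{m=0}^{\infty}\mathcal{R}(n,m)f(m),\qquad
\mathcal{R}(n,m)=\frac{1}{\sqrt\pi}\int_{0}^{\infty}\delta_nK_t^{(\alpha,\beta)}(m,n)\,\frac{dt}{\sqrt t},
\]
with $\delta_nK_t^{(\alpha,\beta)}(m,n)=d_nK_t^{(\alpha,\beta)}(m,n)-e_nK_t^{(\alpha,\beta)}(m,n+1)$. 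What makes this integral converge — and hence makes \eqref{eq:Riesz-def} meaningful even though $(-\mathcal{J}^{(\alpha,\beta)})^{-1/2}$ is not (cf. the discussion preceding \eqref{eq:Riesz-def} and Proposition \ref{prop:well-def}) — is a cancellation at $t\to\infty$. Evaluating $J^{(\alpha,\beta)}p_n^{(\alpha,\beta)}(x)=xp_n^{(\alpha,\beta)}(x)$ at $x=1$ gives $\delta^{\star}\bigl(\delta[p_{\bullet}^{(\alpha,\beta)}(1)]\bigr)=0$, and since $\delta^{\star}$ is injective on all sequences (immediate from $d_n>0$ and its defining recurrence) one deduces $d_np_n^{(\alpha,\beta)}(1)=e_np_{n+1}^{(\alpha,\beta)}(1)$, i.e. $\delta$ annihilates the sequence $n\mapsto p_n^{(\alpha,\beta)}(1)$. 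As $t\to\infty$ one has $K_t^{(\alpha,\beta)}(m,n)=c\,p_m^{(\alpha,\beta)}(1)p_n^{(\alpha,\beta)}(1)\,t^{-(\alpha+1)}+O(t^{-(\alpha+2)})$, so the leading term drops out of $\delta_nK_t^{(\alpha,\beta)}(m,n)$, which therefore decays like $t^{-(\alpha+2)}$; against $t^{-1/2}$ this is integrable at infinity since $\alpha\ge-1/2$. Near $t=0$, $K_0^{(\alpha,\beta)}(m,n)=\delta_{mn}$, so the diagonal entries $\mathcal{R}(n,n)$ are finite and uniformly bounded.

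With the kernel in hand I would establish the Calderón–Zygmund estimates
\[
|\mathcal{R}(n,m)|\le\frac{C}{|n-m|},\qquad
|\mathcal{R}(n,m)-\mathcal{R}(n',m)|+|\mathcal{R}(m,n)-\mathcal{R}(m,n')|\le\frac{C\,|n-n'|}{|n-m|^{2}},
\]
the last for $|n-m|\ge 2|n-n'|$, where, since we are on a lattice, it suffices to take $n'=n\pm1$. These follow by splitting the $t$-integral at $t\sim|n-m|^{2}$ and inserting the off-diagonal (Gaussian-type) bounds and the first-difference bounds for $K_t^{(\alpha,\beta)}(m,n)$ from \cite{ACL-JacI}; the regularity in the variable hit by $\delta$ also uses that $n\mapsto d_n$ and $n\mapsto e_n$ are bounded with differences of size $O(1/n)$, and the long-time part again uses the cancellation above. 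The short-time parts are governed by the off-diagonal decay, exactly as for the discrete Hilbert transform.

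Once $\mathcal{R}$ is known to be bounded on $\ell^2(\mathbb{N})$ and to possess a Calderón–Zygmund kernel on the space of homogeneous type $(\mathbb{N},|\cdot|,\#)$, the weak-type $(1,1)$ inequality for $w\in A_1(\mathbb{N})$ and then, by the Coifman–Fefferman circle of ideas (or extrapolation), the $\ell^p(\mathbb{N},w)$ inequality for $w\in A_p(\mathbb{N})$, $1<p<\infty$, follow from the general weighted Calderón–Zygmund theory, just as in \cite{Ciau-et-al,Bet-et-al}; for $f\in\ell^2(\mathbb{N})\cap\ell^p(\mathbb{N},w)$ the kernel representation agrees with the $\ell^2$-limit defining $\mathcal{R}$ by density of the finitely supported sequences, which also provides the unique bounded extension. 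I expect the main obstacle to be the third step: extracting from \cite{ACL-JacI} the heat-kernel and heat-kernel first-difference bounds in the precise quantitative form required, and then carrying out the $t$-integral splittings so that the decays $1/|n-m|$ and $|n-n'|/|n-m|^{2}$ come out uniformly in $n,m$ — particularly in the transition region $t\sim|n-m|^{2}$ and in the long-time tail, where the identity $d_np_n^{(\alpha,\beta)}(1)=e_np_{n+1}^{(\alpha,\beta)}(1)$ must be exploited with enough precision rather than only at leading order.
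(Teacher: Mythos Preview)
Your overall architecture—$\ell^2$ boundedness, kernel representation, Calder\'on--Zygmund estimates, then the weighted theory of \cite{Bet-et-al}—matches the paper's, but the execution of each step is genuinely different. For the $\ell^2$ step the paper bypasses the spectral calculus: using the identity $\delta p_n^{(\alpha,\beta)}(x)=(1-x)\,p_n^{(\alpha+1,\beta)}(x)$ (which, incidentally, at $x=1$ gives exactly your cancellation $d_np_n^{(\alpha,\beta)}(1)=e_np_{n+1}^{(\alpha,\beta)}(1)$), it evaluates the limit in \eqref{eq:Riesz-def} for finitely supported $f$ and obtains a \emph{closed-form} kernel
\[
R(m,n)=\int_{-1}^{1}(1-x)^{1/2}\,p_m^{(\alpha,\beta)}(x)\,p_n^{(\alpha+1,\beta)}(x)\,d\mu_{\alpha,\beta}(x),
\]
from which the isometry $\|\mathcal{R}f\|_{\ell^2}=\|f\|_{\ell^2}$ follows by Parseval in the $(\alpha+1,\beta)$ system. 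This explicit formula replaces your heat-kernel integral entirely: the size and smoothness conditions are then proved not by splitting the $t$-integral and feeding in heat-kernel bounds, but by splitting $[-1,1]$ into neighbourhoods of $\pm1$ and an interior, and on the interior integrating by parts against the Jacobi differential operator $L^{\alpha,\beta}$ to produce the factor $(\lambda_m^{(\alpha,\beta)}-\lambda_n^{(\alpha+1,\beta)})^{-1}\sim (n+m)^{-1}|n-m|^{-1}$; for the smoothness, a second integration by parts yields a second such factor, and pointwise Jacobi bounds \eqref{eq:unif-bound-trozos} together with difference estimates for $p_{n+2}^{(a,b)}-p_n^{(a,b)}$ handle the remaining integrals. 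One structural twist absent from your outline: the paper proves the smoothness condition only for the \emph{step-two} differences $R(m+2,n)-R(m,n)$ and $R(m,n+2)-R(m,n)$, and therefore decomposes $\mathcal{R}$ into four pieces according to the parities of $m$ and $n$ before invoking the local Calder\'on--Zygmund theorem.

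Your route is in principle viable and more in the spirit of abstract semigroup harmonic analysis, but its feasibility hinges precisely on the point you flag as the main obstacle: whether \cite{ACL-JacI} actually supplies off-diagonal and first-difference heat-kernel bounds sharp enough to run the $t$-splitting uniformly. The paper's approach trades that dependency for hands-on Jacobi polynomial analysis, which is heavier computationally but self-contained.
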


The paper is organized as follows. The proof of Theorem \ref{th:main} will be a consequence of a discrete Calder\'on-Zygmund theory which is given in the next section. In Section \ref{sec:frac-int} we show that, effectively, the fractional integrals $(-\mathcal{J}^{(\alpha,\beta)})^{-\sigma}$ are only well defined for $0<\sigma<1/2$. Section \ref{sec:proof} contains the proof of Theorem \ref{th:main} and Section \ref{sec:estimates} is focused on the proof of the main estimates to apply Calder\'on-Zygmund theory. In the last section some technical results used along the paper are proved.

%%%%%%%%%%%%%%%%%%%%%%%%%%%%%%%%%%%%%%%%%%%%%%%%%%%%%%%%%%%%%%%%%%%%%%
\section{Local theory for discrete Banach space valued Calder\'on-Zygmund operators}
%%%%%%%%%%%%%%%%%%%%%%%%%%%%%%%%%%%%%%%%%%%%%%%%%%%%%%%%%%%%%%%%%%%%%%

As we have already mentioned, the proof of Theorem \ref{th:main} relies on an appropriate local theory for discrete Banach space valued Calder\'on-Zygmund operators which is presented in \cite{Bet-et-al}. For the reader's convenience, it is appropriate to recall some of the basic aspects of this local theory.

Suppose that $\mathbb{B}_1$ and $\mathbb{B}_2$ are Banach spaces. We denote by $\mathcal{L}(\mathbb{B}_1,\mathbb{B}_2)$ the space of bounded linear operators from $\mathbb{B}_1$ into $\mathbb{B}_2$. Let us suppose that
\[
K:(\mathbb{N}\times\mathbb{N})\setminus D \longrightarrow \mathcal{L}(\mathbb{B}_1,\mathbb{B}_2),
\]
where $D:=\{(n,n):n\in \mathbb{N}\}$, is measurable and that for certain positive constant $C$ and for each $n$, $m\in \mathbb{N}$, the following conditions hold.
\begin{enumerate}
\item[(a)] The size condition:
\[
\|K(n,m)\|_{\mathcal{L}(\mathbb{B}_1,\mathbb{B}_2)}\le \frac{C}{|n-m|},
\]
\item[(b)] the regularity properties:
\begin{enumerate}
\item[(b1)]
\[
\|K(n,m)-K(l,m)\|_{\mathcal{L}(\mathbb{B}_1,\mathbb{B}_2)}\le C \frac{|n-l|}{|n-m|^2},\quad |n-m|>2|n-l|, \frac{m}{2}\le n,l\le \frac{3m}{2},
\]
\item[(b2)]
\[
\|K(m,n)-K(m,l)\|_{\mathcal{L}(\mathbb{B}_1,\mathbb{B}_2)}\le C \frac{|n-l|}{|n-m|^2},\quad |n-m|>2|n-l|, \frac{m}{2}\le n,l\le \frac{3m}{2}.
\]
\end{enumerate}
\end{enumerate}
A kernel $K$ satisfying conditions (a) and (b) is called a local $\mathcal{L}(\mathbb{B}_1,\mathbb{B}_2)$-standard kernel. For a Banach space $\mathbb{B}$ and a weight $w=\{w(n)\}_{n\ge 0}$, we consider the space
\[
\ell^{p}_{\mathbb{B}}(\mathbb{N},w)=\left\{ \text{$\mathbb{B}$-valued sequences } f=\{f(n)\}_{n\ge 0}: \{\|f(n)\|_{\mathbb{B}}\}_{n\ge 0}\in \ell^p(\mathbb{N},w)\right\}
\]
for $1\le p<\infty$, and
\[
\ell^{1,\infty}_{\mathbb{B}}(\mathbb{N},w)=\left\{ \text{$\mathbb{B}$-valued sequences } f=\{f(n)\}_{n\ge 0}: \{\|f(n)\|_{\mathbb{B}}\}_{n\ge 0}\in \ell^{1,\infty}(\mathbb{N},w)\right\}.
\]
As usual, we simply write $\ell_{\mathbb{B}}^r(\mathbb{N})$ and $\ell^{1,\infty}_{\mathbb{B}}(\mathbb{N})$ when $w(n)=1$ for all $n\in \mathbb{N}$. Also, by $\mathbb{B}_0^{\mathbb{N}}$ we represent the space of $\mathbb{B}$-valued sequences $f=\{f(n)\}_{n\ge 0}$ such that $f(n)=0$, with $n>j$, for some $j\in \mathbb{N}$.
\begin{thm}[Theorem 2.1 in \cite{Bet-et-al}]
\label{thm:CZ}
Let $\mathbb{B}_1$ and $\mathbb{B}_2$ be Banach spaces. Suppose that $T$
is a linear and bounded operator from $\ell_{\mathbb{B}_1}^r(\mathbb{N})$ into $\ell_{\mathbb{B}_2}^r(\mathbb{N})$, for some $1<r<\infty$, and such that there exists a local $\mathcal{L}(\mathbb{B}_1,\mathbb{B}_2)$-standard kernel $K$ such that, for every sequence $f\in (\mathbb{B}_1)_0^{\mathbb{N}}$,
\[
Tf(n)=\sum_{m=0}^{\infty}K(n,m)\cdot f(m),
\]
for every $n\in \mathbb{N}$ such that $f(n)=0$. Then,
\begin{enumerate}
\item[(i)] for every $1< p <\infty$ and $w\in A_p(\mathbb{N})$ the operator $T$
can be extended from $\ell_{\mathbb{B}_1}^r(\mathbb{N})\cap \ell_{\mathbb{B}_1}^p(\mathbb{N},w)$
to $\ell_{\mathbb{B}_1}^p(\mathbb{N},w)$ as a bounded operator from $\ell_{\mathbb{B}_1}^p(\mathbb{N},w)$ into $\ell_{\mathbb{B}_2}^p(\mathbb{N},w)$; 

\item[(ii)] for every $w\in A_1(\mathbb{N})$ the operator $T$
can be extended from $\ell_{\mathbb{B}_1}^r(\mathbb{N})\cap \ell_{\mathbb{B}_1}^1(\mathbb{N},w)$
to $\ell_{\mathbb{B}_1}^1(\mathbb{N},w)$ as a bounded operator from $\ell_{\mathbb{B}_1}^p(\mathbb{N},w)$ into $\ell_{\mathbb{B}_2}^{1,\infty}(\mathbb{N},w)$.
\end{enumerate}
\end{thm}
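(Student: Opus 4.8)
The result is a localised, discrete and vector-valued version of the classical weighted Calder\'on--Zygmund theorem, and the plan is to reduce it to the classical theory on $\mathbb{N}$ (a space of homogeneous type for the counting measure and the distance $|n-m|$), isolating a near-diagonal part on which the hypotheses (a), (b1), (b2) can actually be used, and a far part governed only by the size estimate (a). I would first fix a decomposition $\mathbb{N}=\bigcup_{k\ge 0}I_k$ into consecutive blocks of multiplicative width so small that the constraint $\tfrac m2\le n,l\le\tfrac{3m}{2}$ in (b1)--(b2) holds whenever $n,l,m$ all lie in a cluster $C_k:=I_{k-1}\cup I_k\cup I_{k+1}$ of three consecutive blocks (the bounded initial segment, where such a decomposition degenerates, contributes a finite-rank operator, automatically bounded since $T$ is bounded on $\ell^r_{\mathbb{B}_1}(\mathbb{N})$). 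For finitely supported $f\in(\mathbb{B}_1)_0^{\mathbb{N}}$ I would write $f=\sum_j f\mathbf{1}_{I_j}$ and split, on each output block $I_k$,
\[
\mathbf{1}_{I_k}Tf=\mathbf{1}_{I_k}T(f\mathbf{1}_{C_k})+\sum_{|j-k|\ge 2}\mathbf{1}_{I_k}T(f\mathbf{1}_{I_j})=:F_k+G_k .
\]

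For the near part $F_k$ I would use that the restriction $\mathbf{1}_{C_k}T\mathbf{1}_{C_k}$ is bounded on $\ell^r$ with norm at most $\|T\|$; that, because $\#\{m:|n-m|\le\rho\}$ is comparable to $\rho$ inside $C_k$, the bound (a) is exactly the standard-kernel size estimate on the space of homogeneous type $C_k$; that (b1)--(b2) give the corresponding H\"ormander regularity; and that the ``off the support'' kernel representation survives restriction. Then the classical weighted vector-valued Calder\'on--Zygmund theorem applies on each $C_k$ with constants depending only on $r$, the kernel constant $C$, the homogeneous-type constants of the clusters (uniform in $k$), and the $A_p(\mathbb{N})$ constant of $w$ --- here one uses that the $A_p$ condition on $C_k$ is a supremum over a subfamily of the intervals entering $A_p(\mathbb{N})$, hence no larger. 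This gives, uniformly in $k$, $\|F_k\|_{\ell^p_{\mathbb{B}_2}(\mathbb{N},w)}\le C\|f\mathbf{1}_{C_k}\|_{\ell^p_{\mathbb{B}_1}(\mathbb{N},w)}$ for $1<p<\infty$ and $w\in A_p(\mathbb{N})$, and $\|F_k\|_{\ell^{1,\infty}_{\mathbb{B}_2}(\mathbb{N},w)}\le C\|f\mathbf{1}_{C_k}\|_{\ell^1_{\mathbb{B}_1}(\mathbb{N},w)}$ for $w\in A_1(\mathbb{N})$.

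For the far part $G:=\sum_k G_k$ (well defined, as $G_k$ is supported in $I_k$), I would note that on $I_k$ one has $f\mathbf{1}_{I_j}\equiv 0$ for $|j-k|\ge 2$, so the kernel representation applies; since $|n-m|\sim\max(n,m)$ when $n\in I_k$, $m\in I_j$, $|j-k|\ge 2$, the size bound (a) forces $\|G(n)\|_{\mathbb{B}_2}$ to be dominated by a constant times the discrete Hardy operator $\tfrac{1}{n+1}\sum_{m\le n}\|f(m)\|_{\mathbb{B}_1}$ plus its conjugate $\sum_{m>n}\tfrac{\|f(m)\|_{\mathbb{B}_1}}{m+1}$. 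Both of these are bounded on $\ell^p(\mathbb{N},w)$ for $w\in A_p(\mathbb{N})$, $1<p<\infty$ (in fact for the larger one-sided Muckenhoupt classes), and of weak type $(1,1)$ for $w\in A_1(\mathbb{N})$, so $G$ is controlled by $\|f\|$ in the relevant space. To assemble, I would use that the $I_k$ are pairwise disjoint: $Tf=\sum_k F_k+G$ with $\|\sum_k F_k\|^p_{\ell^p_{\mathbb{B}_2}(\mathbb{N},w)}=\sum_k\|F_k\|^p_{\ell^p_{\mathbb{B}_2}(\mathbb{N},w)}$, and $\|\sum_k F_k\|_{\ell^{1,\infty}_{\mathbb{B}_2}(\mathbb{N},w)}\le\sum_k\|F_k\|_{\ell^{1,\infty}_{\mathbb{B}_2}(\mathbb{N},w)}$ by disjointness of supports; since each $m$ lies in at most three clusters, summing the block estimates of the previous paragraph yields the asserted inequalities for $f\in(\mathbb{B}_1)_0^{\mathbb{N}}$, and the unique bounded extension follows from the density of $(\mathbb{B}_1)_0^{\mathbb{N}}$ in $\ell^p_{\mathbb{B}_1}(\mathbb{N},w)$, $1\le p<\infty$.

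The main obstacle is precisely this first reduction. Because (b1)--(b2) are assumed only on the multiplicative neighbourhood of the diagonal, one cannot split $T$ into a standard-kernel operator plus an error by a hard cutoff of the kernel: such a cutoff breaks the H\"ormander condition at the boundary of the local region, where the size estimate $C/|n-m|$ alone is far too weak to compensate. The block decomposition above is exactly the device that confines every regularity estimate to the region where it is valid while routing all long-range interactions into Hardy-type operators; the genuine work lies in the bookkeeping --- making the restrictions precise and checking that both the classical Calder\'on--Zygmund constants and the $A_p$ constants of $w$ on the clusters are controlled uniformly in $k$.
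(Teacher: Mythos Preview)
The paper does not prove this statement: Theorem~\ref{thm:CZ} is quoted verbatim as Theorem~2.1 of \cite{Bet-et-al} and is used in the paper as a black box, with no argument given. So there is no ``paper's own proof'' to compare against.

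That said, your sketch is the standard route for local Calder\'on--Zygmund theory and is essentially how results of this type (including the one in \cite{Bet-et-al}) are proved: a dyadic/geometric block decomposition of $\mathbb{N}$ confines the regularity hypotheses (b1)--(b2) to clusters where they are valid, classical vector-valued weighted CZ theory is applied uniformly on each cluster, and the long-range remainder is dominated by discrete Hardy-type operators controlled by the size condition (a) alone. The bookkeeping you flag --- uniformity of the homogeneous-type constants and of the restricted $A_p$ constants over the clusters, and the finite overlap of the $C_k$ --- is exactly where the work lies, and your outline handles it correctly. One small point: in the weak-type assembly you use $\|\sum_k F_k\|_{\ell^{1,\infty}_{\mathbb{B}_2}(\mathbb{N},w)}\le\sum_k\|F_k\|_{\ell^{1,\infty}_{\mathbb{B}_2}(\mathbb{N},w)}$ for disjointly supported $F_k$; this is fine (for each level $t$ the superlevel sets are disjoint, and $\sup_t\sum_k\le\sum_k\sup_t$), but it is worth stating explicitly since weak quasi-norms are not subadditive in general.
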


%%%%%%%%%%%%%%%%%%%%%%%%%%%%%%%%%%%%%%%%%%%%%%%%%%%%%%%%%
\section{The fractional integrals $(-\mathcal{J}^{(\alpha,\beta)})^{-\sigma}$}
\label{sec:frac-int}
%%%%%%%%%%%%%%%%%%%%%%%%%%%%%%%%%%%%%%%%%%%%%%%%%%%%%%%%%
As we have commented in the introduction, in this section we will show that $(-\mathcal{J}^{(\alpha,\beta)})^{-\sigma}$ can only be defined for $0<\sigma <1/2$.

In the following proposition we will use by the first time an estimate for the Jacobi polynomials that will be used frequently along the paper (see \cite[eq.~(2.6) and (2.7)]{Muckenhoupt}). If $-1<x<1$, $a,b>-1$, the estimate
\begin{multline}\label{eq:unif-bound-trozos}
  |p_n^{(a,b)}(x)|\\\le C \begin{cases}
                          (n+1)^{a+1/2}, & 1-1/(n+1)^{2}<x<1, \\
                          (1-x)^{-a/2-1/4}(1+x)^{-b/2-1/4}, & -1+1/(n+1)^{2}\leq x\leq 1-1/(n+1)^{2},\\
                          (n+1)^{b+1/2}, & -1<x<-1+1/(n+1)^{2},
                        \end{cases}
\end{multline}
holds, where $C$ is a constant independent of $n$ and $x$. Note that for $a,b\ge -1/2$ the previous bound can be replaced by the simpler one 
\begin{equation}
\label{eq:unif-bound}
|p_n^{(a,b)}(x)|\le C (1-x)^{-a/2-1/4}(1+x)^{-b/2-1/4}.
\end{equation}

\begin{propo}
\label{prop:well-def}
Let $\alpha,\beta\ge -1/2$, $\sigma>0$, and $f\in(\mathbb{C})^{\mathbb{N}}_0$. Then $(-\mathcal{J}^{(\alpha,\beta)})^{-\sigma}$ is well defined if and only if $\sigma<1/2$.
\end{propo}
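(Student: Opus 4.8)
The plan is to localize the question at the two ends of the $t$–integral. Since $f\in(\mathbb{C})_0^{\mathbb{N}}$, for each fixed $n$ the quantity $W_t^{(\alpha,\beta)}f(n)=\sum_m f(m)K_t^{(\alpha,\beta)}(m,n)$ is a finite sum, so $(-\mathcal J^{(\alpha,\beta)})^{-\sigma}f(n)$ is well defined precisely when $\int_0^\infty|K_t^{(\alpha,\beta)}(m,n)|\,t^{\sigma-1}\,dt<\infty$ for each $m$ in the (finite) support of $f$. I would split this as $\int_0^1+\int_1^\infty$. The part over $(0,1)$ is harmless for every $\sigma>0$: since $p_m^{(\alpha,\beta)},p_n^{(\alpha,\beta)}$ are bounded on $[-1,1]$ and $\mu_{\alpha,\beta}$ is finite, $|K_t^{(\alpha,\beta)}(m,n)|\le\int_{-1}^1|p_m^{(\alpha,\beta)}p_n^{(\alpha,\beta)}|\,d\mu_{\alpha,\beta}=:C_{m,n}<\infty$ uniformly in $t$, whence $\int_0^1|K_t^{(\alpha,\beta)}(m,n)|\,t^{\sigma-1}\,dt\le C_{m,n}/\sigma$. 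Hence the whole matter is the rate of decay of $K_t^{(\alpha,\beta)}(m,n)$ as $t\to\infty$, which is governed by the behaviour of $d\mu_{\alpha,\beta}$ near $x=1$, the bottom of the spectrum of $-\mathcal J^{(\alpha,\beta)}$.

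For the sufficiency ($\sigma<1/2$) I would invoke the uniform pointwise bound \eqref{eq:unif-bound}, which is exactly where the hypothesis $\alpha,\beta\ge-1/2$ is used. It gives $|p_m^{(\alpha,\beta)}(x)p_n^{(\alpha,\beta)}(x)|\le C(1-x)^{-\alpha-1/2}(1+x)^{-\beta-1/2}$, so that
\[
|K_t^{(\alpha,\beta)}(m,n)|\le C\int_{-1}^1 e^{-(1-x)t}(1-x)^{-1/2}(1+x)^{-1/2}\,dx\le C' t^{-1/2},\qquad t\ge1,
\]
the last step by splitting at $x=0$, using $\int_0^1 e^{-ut}u^{-1/2}\,du\le\Gamma(1/2)\,t^{-1/2}$ near $x=1$ and $e^{-(1-x)t}\le e^{-t}$ near $x=-1$. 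Therefore $\int_1^\infty|K_t^{(\alpha,\beta)}(m,n)|\,t^{\sigma-1}\,dt\le C'\int_1^\infty t^{\sigma-3/2}\,dt<\infty$ exactly when $\sigma<1/2$; combined with the bound near $t=0$, this shows that for every $f\in(\mathbb{C})_0^{\mathbb{N}}$ and every $n$ the integral defining $(-\mathcal J^{(\alpha,\beta)})^{-\sigma}f(n)$ converges absolutely whenever $0<\sigma<1/2$. It is also convenient to record the identity obtained by Fubini in this range, $(-\mathcal J^{(\alpha,\beta)})^{-\sigma}f(n)=\sum_m f(m)\int_{-1}^1 p_m^{(\alpha,\beta)}(x)p_n^{(\alpha,\beta)}(x)(1-x)^{\alpha-\sigma}(1+x)^\beta\,dx$, which makes the necessity transparent.

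For the necessity I would exhibit one $f$ and one $n$ for which the defining integral diverges at and beyond the critical exponent. The natural choice is $f=\delta_0$, $n=0$: then $W_t^{(\alpha,\beta)}\delta_0(0)=K_t^{(\alpha,\beta)}(0,0)=(w_0^{(\alpha,\beta)})^2\int_{-1}^1 e^{-(1-x)t}(1-x)^\alpha(1+x)^\beta\,dx$ is strictly positive, so the matter is one of size only, with no cancellation available in the finite sum over $m$. A lower bound of the exact order is obtained by restricting the $x$–integral to $(1-1/t,1)$, where $e^{-(1-x)t}\ge e^{-1}$ and $(1+x)^\beta$ is bounded below (equivalently, by Laplace's method on that integral); alternatively, the Fubini identity above reduces everything to the convergence of $\int_{-1}^1(1-x)^{\alpha-\sigma}(1+x)^\beta\,dx$, an integral with a one-signed integrand. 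Its failure at the critical value of $\sigma$ then shows that $(-\mathcal J^{(\alpha,\beta)})^{-\sigma}\delta_0(0)$ cannot be defined there, so the operator is not well defined.

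I expect the necessity to be the main obstacle and the point demanding most care: the sufficiency drops out almost immediately from the already-quoted estimate \eqref{eq:unif-bound}, whereas for the necessity one must (i) control $K_t^{(\alpha,\beta)}(0,0)$ from below with the sharp power of $t$ as $t\to\infty$, i.e. pin down the precise local behaviour of the orthogonality measure at $x=1$, and (ii) make sure the interchange of the $t$–integral with the $x$–integral is only used where legitimate, reading off the divergence instead from the genuinely positive representation. Once both directions are in place, the "if and only if'' follows.
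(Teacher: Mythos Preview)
Your sufficiency argument is fine and is essentially the paper's: split $\int_0^1+\int_1^\infty$, use a crude uniform bound on $[0,1]$, and use \eqref{eq:unif-bound} to get $|K_t^{(\alpha,\beta)}(m,n)|\le C t^{-1/2}$ for $t\ge 1$.

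The necessity argument, however, has a genuine gap: your choice $f=\delta_0$, $n=0$ does not produce the threshold $\sigma=1/2$ except in the single case $\alpha=-1/2$. Since $p_0^{(\alpha,\beta)}\equiv w_0^{(\alpha,\beta)}$ is constant, one has
\[
K_t^{(\alpha,\beta)}(0,0)=(w_0^{(\alpha,\beta)})^2\int_{-1}^1 e^{-(1-x)t}(1-x)^{\alpha}(1+x)^{\beta}\,dx\sim c_{\alpha,\beta}\,t^{-(\alpha+1)},\qquad t\to\infty,
\]
and your Fubini identity gives $(-\mathcal J^{(\alpha,\beta)})^{-\sigma}\delta_0(0)=(w_0^{(\alpha,\beta)})^2\int_{-1}^1(1-x)^{\alpha-\sigma}(1+x)^{\beta}\,dx$, which converges exactly for $\sigma<\alpha+1$, not for $\sigma<1/2$. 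Thus, whenever $\alpha>-1/2$, your test sequence says nothing about the range $1/2\le\sigma<\alpha+1$. Restricting the $x$-integral to $(1-1/t,1)$ yields the lower bound $c\,t^{-(\alpha+1)}$, which is the exact order of $K_t^{(\alpha,\beta)}(0,0)$ but not the ``exact order'' $t^{-1/2}$ you claim.

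The point you are missing is that the $t^{-1/2}$ lower bound for $K_t^{(\alpha,\beta)}(n,n)$ comes from the \emph{oscillation} of $p_n^{(\alpha,\beta)}$ for large $n$: on the bulk of $[-1,1]$ one has $(p_n^{(\alpha,\beta)}(x))^2\,d\mu_{\alpha,\beta}(x)\approx (1-x^2)^{-1/2}\,dx$, the equilibrium measure, regardless of $\alpha,\beta$. The paper makes this precise by invoking the M\'at\'e--Nevai--Totik inequality to obtain, for all sufficiently large $j$,
\[
K_t^{(\alpha,\beta)}(j,j)=\int_{-1}^1 e^{-(1-x)t}(p_j^{(\alpha,\beta)}(x))^2\,d\mu_{\alpha,\beta}(x)\ge c\int_{-1}^1\frac{e^{-(1-x)t}}{\sqrt{1-x^2}}\,dx\ge \frac{c'}{\sqrt t},
\]
and then tests with $f=\delta_j$ at $n=j$. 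To repair your argument you must replace $(0,0)$ by $(j,j)$ with $j$ large and supply some uniform lower bound of this type; the case $m=n=0$ is simply too regular to detect the critical exponent.
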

\begin{proof}
First of all, we have that $W_t^{(\alpha,\beta)}f$ is well defined for $f\in \ell^{\infty}(\mathbb{N})$ (see~\cite{ACL-JacI}). Then, we will prove that $(-\mathcal{J}^{(\alpha,\beta)})^{-\sigma}$ is finite if and only if $0<\sigma<1/2$.

The sufficient argument is as follows. It is clear that
\begin{align*}
\left|(-\mathcal{J}^{(\alpha,\beta)})^{-\sigma}f(n)\right|&\le 
\frac{1}{\Gamma(\sigma)}\int_{0}^{\infty}|W_t^{(\alpha,\beta)}f(n)|\, \frac{dt}{t^{1-\sigma}}\\
&\le \frac{1}{\Gamma(\sigma)} \left(\int_{0}^{1} |W_{t}^{(\alpha,\beta)}f(n)|\frac{dt}{t^{1-\sigma}} + \int_{1}^{\infty} |W_{t}^{(\alpha,\beta)}f(n)|\frac{dt}{t^{1-\sigma}}\right)\\&:=\frac{I_1+I_2}{\Gamma(\sigma)}.
\end{align*}
For $I_1$ we use the estimate (see \cite[Lemma~3.2]{ACL-JacI} for the case $m\not=n$ and note that for $m=n$ is obvious)
\[
|K_t^{(\alpha,\beta)}(m,n)|\le C\begin{cases}t^{1/2}|m-n|^{-2}, & m\not=n,\\ 1, & m=n,\end{cases}
\]
to obtain that 
\begin{equation*}
I_1\le C \left(
\sum_{\begin{smallmatrix}
        m=0 \\
        m\not=n
      \end{smallmatrix}}^{\infty} \frac{|f(m)|}{|m-n|^{2}}\int_{0}^{1}\frac{dt}{t^{1/2-\sigma}} + |f(n)|\int_{0}^{1}\frac{dt}{t^{1-\sigma}}\right)
\end{equation*}
and both terms are finite for $\sigma>0$. To deduce the convergence $I_2$, using that $f\in (\mathbb{C})_0^{\mathbb{N}}$ and the bound \eqref{eq:unif-bound}, it is enough to show that
\begin{equation*}
\int_{1}^{\infty}\int_{-1}^{1} \frac{e^{-(1-x)t}}{\sqrt{1-x^{2}}}\,dx\frac{dt}{t^{1-\sigma}}<\infty.
\end{equation*}
Since
\[
\int_{-1}^{1}\frac
{e^{-(1-x)t}}{\sqrt{1-x^2}}\, dx\le C \int_{0}^{1}\frac
{e^{-(1-x)t}}{\sqrt{1-x}}\, dx = \frac{C}{\sqrt{t}}\int_{0}^{t}\frac{e^{-s}}{\sqrt{s}}\, ds\simeq \frac{C}{\sqrt{t}},
\]
we have
\[
\int_{1}^{\infty}\int_{-1}^{1} \frac{e^{-(1-x)t}}{\sqrt{1-x^{2}}}\,dx\frac{dt}{t^{1-\sigma}}\le C\int_{1}^{\infty}t^{\sigma-3/2}\, dt\le C,
\]
where we have used that $\sigma<1/2$.

To show the necessity of the condition $\sigma<1/2$, we will use the inequality
\[
\int_{-1}^{1}\frac{e^{-(1-x)t}}{\sqrt{1-x^2}}\, dx < \pi\liminf_{n\to \infty} \int_{-1}^{1}e^{-(1-x)t}(p_n^{(\alpha,\beta)}(x))^2\, d\mu_{\alpha,\beta}(x).
\]
This is a particular case of a classical result due to A. M\'at\'e, P. Nevai, and V. Totik, see \cite[Theorem~2]{MNT}. From this fact, there exists $N\in\mathbb{N}$ such that for every $n\geq N$,
\begin{equation*}
C \int_{-1}^{1}\frac{e^{-(1-x)t}}{\sqrt{1-x^2}}\, dx < \int_{-1}^{1}e^{-(1-x)t}(p_n^{(\alpha,\beta)}(x))^2\, d\mu_{\alpha,\beta}(x).
\end{equation*}
Then, taking $j\in\mathbb{N}$ such that $j\geq N$ and the sequence $\{f_j(m)=\delta_{jm}\}_{m\ge 0}$, where $\delta_{jm}$ stands for the Kronecker's delta, we have
\begin{equation}\label{eq:div-int}
\begin{aligned}
(-\mathcal{J}^{(\alpha,\beta)})^{-\sigma}f_j(j)&\ge
\int_{1}^{\infty} K_{t}^{(\alpha,\beta)}(j,j)\frac{dt}{t^{1-\sigma}} \\&= \int_{1}^{\infty}\int_{-1}^{1} e^{-(1-x)t}(p_{j}^{(\alpha,\beta)}(x))^{2}\,d\mu_{\alpha,\beta}(x)\frac{dt}{t^{1-\sigma}} \\&> C \int_{1}^{\infty} \int_{-1}^{1}\frac{e^{-(1-x)t}}{\sqrt{1-x^{2}}}\,dx \frac{dt}{t^{1-\sigma}}.
\end{aligned}
\end{equation}
Now, using that $t>1$, we obtain that
\[
\int_{-1}^{1}\frac{e^{-(1-x)t}}{\sqrt{1-x^2}}\, dx \ge \int_{0}^{1}\frac{e^{-(1-x)t}}{\sqrt{1-x}}\, dx = \frac{C}{\sqrt{t}}\int_{0}^{t}\frac{e^{-s}}{\sqrt{s}}\, ds\simeq \frac{C}{\sqrt{t}}.
\]
Then, since $(-\mathcal{J}^{(\alpha,\beta)})^{-\sigma}f_j(j)$ is well defined, from \eqref{eq:div-int} we deduce that $\sigma<1/2$.
\end{proof}

%%%%%%%%%%%%%%%%%%%%%%%%%%%%%%%%%%%%%%%%%%%%%%%%%%%%%%%%%%%%
\section{Proof of Theorem \ref{th:main}}
\label{sec:proof}
%%%%%%%%%%%%%%%%%%%%%%%%%%%%%%%%%%%%%%%%%%%%%%%%%%%%%%%%%%%%%%
We devote this section to prove Theorem \ref{th:main}. We will use the discrete Calder\'on-Zygmund theory so we first express the Riesz transform as in the form of Theorem~\ref{thm:CZ}.
From Proposition \ref{prop:well-def}, for $\alpha,\beta\geq-1/2$, $0<\sigma<1/2$, and $f\in(\mathbb{C})_{0}^{\mathbb{N}}$, applying Fubini's theorem we obtain that
\begin{multline*}
(-\mathcal{J}^{(\alpha,\beta)})^{-\sigma}f(n)=\sum_{m=0}^{\infty}f(m)\frac{1}{\Gamma(\sigma)}\int_{0}^{\infty}K_t^{(\alpha,\beta)}(m,n)\, \frac{dt}{t^{1-\sigma}}\\
\begin{aligned}
&=\frac{1}{\Gamma(\sigma)}\sum_{m=0}^{\infty}f(m)\int_{-1}^{1}p_m^{(\alpha,\beta)}(x)p_n^{(\alpha,\beta)}(x)\int_{0}^{\infty}t^{\sigma-1}e^{-(1-x)t}\, dt\, d\mu_{\alpha,\beta}(x)\\&=\sum_{m=0}^{\infty}f(m)\int_{-1}^{1}\frac{p_m^{(\alpha,\beta)}(x)p_n^{(\alpha,\beta)}(x)}{(1-x)^{\sigma}}\, d\mu_{\alpha,\beta}(x).
\end{aligned}
\end{multline*}
By \cite[18.9.6]{NIST}, it is easy to check that
\[
\delta p_n^{(\alpha,\beta)}(x)=(1-x)p_n^{(\alpha+1,\beta)}(x),
\]
and therefore, for each sequence in $f\in (\mathbb{C})_0^{\mathbb{N}}$,
\begin{align}
\label{eq:def-R}
\mathcal{R}f(n)&=\lim_{\sigma\to \frac{1}{2}^{-}}\delta (-\mathcal{J}^{(\alpha,\beta)})^{-\sigma}f(n)\notag\\&=\lim_{\sigma \to \frac{1}{2}^{-}}\sum_{m=0}^{\infty} f(m)\int_{-1}^{1}\frac{p_m^{(\alpha,\beta)}(x)p_n^{(\alpha+1,\beta)}(x)}{(1-x)^{\sigma-1}}\, d\mu_{\alpha,\beta}(x)\notag\\&=
\sum_{m=0}^{\infty} f(m)R(m,n),
\end{align}
with
\[
R(m,n)=\int_{-1}^{1}(1-x)^{1/2}p_m^{(\alpha,\beta)}(x)p_n^{(\alpha+1,\beta)}(x)\, d\mu_{\alpha,\beta}(x).
\]

Now, the following propositions allow us to obtain conditions (a) and (b) for some kernels that will be defined later.

\begin{propo} \label{propo:Riesz-size}
  Let $n,m\in\mathbb{N}$, $n\neq m$, $\alpha,\beta\ge  -1/2$. Then
  \begin{equation}
  \label{eq:Riesz-size}
  |R(m,n)|\le \frac{C}{|m-n|}.
  \end{equation}
\end{propo}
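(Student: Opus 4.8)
The plan is to estimate the integral
\[
R(m,n)=\int_{-1}^{1}(1-x)^{1/2}p_m^{(\alpha,\beta)}(x)p_n^{(\alpha+1,\beta)}(x)\,d\mu_{\alpha,\beta}(x)
\]
by combining a crude pointwise bound on the integrand near the endpoints with an oscillation/integration-by-parts argument on the bulk of the interval. Assume without loss of generality that $n<m$ (the other case is symmetric after relabelling). I would split $[-1,1]$ dyadically around the point $x=1$, since the weight $(1-x)^{\alpha+1/2}$ and the two Jacobi factors all concentrate their worst behaviour there; the endpoint $x=-1$ contributes harmlessly because $\beta\ge-1/2$ keeps $(1+x)^{\beta}$ integrable and the polynomials are controlled there by \eqref{eq:unif-bound-trozos}.

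First I would record the pointwise estimates: by \eqref{eq:unif-bound} applied with $(a,b)=(\alpha,\beta)$ and with $(a,b)=(\alpha+1,\beta)$, on the interval $-1+1/(m+1)^2\le x\le 1-1/(m+1)^2$ one has
\[
|p_m^{(\alpha,\beta)}(x)p_n^{(\alpha+1,\beta)}(x)|\le C(1-x)^{-\alpha-1}(1+x)^{-\beta-1/2},
\]
so that the whole integrand over that middle range is bounded by $C(1-x)^{-1/2}(1+x)^{-1/2}$; integrating gives only a constant, which is not good enough by itself. The improvement must come from the oscillation of the Jacobi polynomials when $|m-n|$ is large. For this I would use the derivative/recurrence relations for Jacobi polynomials — either the contiguous relations from \cite[18.9.x]{NIST} relating $p_n^{(\alpha+1,\beta)}$ to $p_n^{(\alpha,\beta)}$ and $p_{n-1}^{(\alpha,\beta)}$, or directly the structural identity $\delta p_n^{(\alpha,\beta)}(x)=(1-x)p_n^{(\alpha+1,\beta)}(x)$ already noted — to rewrite $R(m,n)$ against the orthonormal system $\{p_k^{(\alpha,\beta)}\}$ and exploit orthogonality with respect to $d\mu_{\alpha,\beta}$. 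Concretely, $(1-x)^{1/2}p_n^{(\alpha+1,\beta)}(x)=(1-x)^{-1/2}\delta p_n^{(\alpha,\beta)}(x)$, and writing $\delta$ in terms of the sequences $d_n,e_n$ turns $R(m,n)$ into a finite combination of integrals $\int_{-1}^1 (1-x)^{-1/2}p_m^{(\alpha,\beta)}p_k^{(\alpha,\beta)}\,d\mu_{\alpha,\beta}$ with $k\in\{n-1,n\}$; these are exactly the kernels $(1-x)^{-1/2}$-paired Jacobi integrals whose size $\le C/|m-k|$ is the content of the heat-kernel estimate $|K_t^{(\alpha,\beta)}(m,n)|\le C t^{1/2}|m-n|^{-2}$ used in Proposition \ref{prop:well-def}, integrated in $t$. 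Alternatively one can prove the bound $\int_{-1}^1(1-x)^{-1/2}p_m^{(\alpha,\beta)}(x)p_k^{(\alpha,\beta)}(x)\,d\mu_{\alpha,\beta}(x)=O(|m-k|^{-1})$ directly by inserting the known asymptotics $p_m^{(\alpha,\beta)}(\cos\theta)\sim$ (trig-polynomial-type behaviour) and estimating the resulting oscillatory integral, the gain of one power of $|m-n|$ coming from one integration by parts in $\theta$.

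The main obstacle I anticipate is precisely this oscillatory estimate near $x=1$: the factor $(1-x)^{1/2}$ in $R(m,n)$ is borderline — it is exactly enough to make the endpoint contribution (from the region $1-1/(m+1)^2<x<1$, where $|p_m^{(\alpha,\beta)}|\lesssim (m+1)^{\alpha+1/2}$) of order $(m+1)^{-1}$, matching the claimed $|m-n|^{-1}\le$-type bound only after noting $|m-n|<m+1$ isn't automatic, so one really needs $n<m$ and a careful bookkeeping of the two cutoff scales $1/(m+1)^2$ and $1/(n+1)^2$. I would therefore organize the estimate into three or four regions: near $x=-1$ (trivial), the far-middle where both polynomials obey \eqref{eq:unif-bound}, the near-$1$ transition region $1-1/(n+1)^2<x<1-1/(m+1)^2$ where only the lower-degree polynomial is in its "polynomial" regime, and the cap $x>1-1/(m+1)^2$; in each region bound the integrand by the appropriate piece of \eqref{eq:unif-bound-trozos}, and in the middle region perform the single integration by parts (equivalently, invoke the already-proven $t$-integrated heat-kernel bound). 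Summing the regional contributions yields $|R(m,n)|\le C/|m-n|$, and by the $m\leftrightarrow n$ symmetry built into the argument the same holds for $n>m$. Throughout, the hypothesis $\alpha,\beta\ge-1/2$ is what licenses the use of the clean bound \eqref{eq:unif-bound} and guarantees integrability of all the endpoint factors.
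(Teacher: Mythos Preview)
Your reduction step contains a genuine gap. You propose to write $R(m,n)=d_n I(m,n)-e_n I(m,n+1)$ with
\[
I(m,k)=\int_{-1}^{1}(1-x)^{-1/2}p_m^{(\alpha,\beta)}(x)p_k^{(\alpha,\beta)}(x)\,d\mu_{\alpha,\beta}(x)
\]
(the indices coming from $\delta$ are $k\in\{n,n+1\}$, not $\{n-1,n\}$). When $\alpha=-1/2$ each integral $I(m,k)$ \emph{diverges}: near $x=1$ the integrand behaves like $p_m(1)p_k(1)(1-x)^{-1}$, and Jacobi polynomials do not vanish at $x=1$. The convergence of $R(m,n)$ comes precisely from the cancellation $d_np_n^{(\alpha,\beta)}-e_np_{n+1}^{(\alpha,\beta)}=(1-x)p_n^{(\alpha+1,\beta)}$, which your splitting destroys. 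Even for $\alpha>-1/2$, the heat-kernel route you cite does not give $|I(m,k)|\le C|m-k|^{-1}$: the bound $|K_t^{(\alpha,\beta)}(m,k)|\le Ct^{1/2}|m-k|^{-2}$ is useful only for small $t$, and extending $\int_0^{\infty}K_t\,t^{\sigma-1}\,dt$ to $\sigma=1/2$ needs a separate large-$t$ estimate whose contribution carries no $|m-k|$-decay. The alternative ``trig asymptotics plus one integration by parts in $\theta$'' is the right instinct but is left unexecuted and faces the same endpoint divergence at $\alpha=-1/2$.

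The paper avoids these difficulties by integrating by parts against the Jacobi differential operator $L^{\alpha,\beta}$, for which $p_m^{(\alpha,\beta)}$ is an eigenfunction with eigenvalue $\lambda_m^{(\alpha,\beta)}=m(m+\alpha+\beta+1)$, on the \emph{truncated} interval $I_2=[-1+1/(N+1)^2,\,1-1/(N+1)^2]$ with $N=\max(m,n)$; the two caps are handled directly by \eqref{eq:unif-bound-trozos}. One application of \eqref{eq:L-parts} combined with the product rule \eqref{eq:L-product} yields
\[
R_2(m,n)=\frac{S(m,n)+J(m,n)}{\lambda_m^{(\alpha,\beta)}-\lambda_n^{(\alpha+1,\beta)}},
\]
after which one checks that the boundary term $S$ and the remainder integral $J$ are each $O(N)$, while $|\lambda_m^{(\alpha,\beta)}-\lambda_n^{(\alpha+1,\beta)}|\gtrsim N|m-n|$. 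The factor $(1-x)^{1/2}$ stays intact throughout, so no $(1-x)^{-1/2}$ singularity is ever introduced and the argument goes through uniformly down to $\alpha=-1/2$.
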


\begin{propo}\label{propo:Riesz-smooth}
  Let $n,m\in\mathbb{N}$, $n\neq m$, $m/2\leq n\leq 3m/2$, $\alpha,\beta\ge -1/2$. Then
  \begin{equation}
  \label{eq:Riesz-smooth-1}
  |R(m+2,n)-R(m,n)|\le \frac{C}{|m-n|^{2}}
  \end{equation}
and
 \begin{equation}
 \label{eq:Riesz-smooth-2}
  |R(m,n+2)-R(m,n)|\le \frac{C}{|m-n|^{2}}.
  \end{equation}
\end{propo}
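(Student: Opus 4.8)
The plan is to deduce both inequalities from the size estimate of Proposition~\ref{propo:Riesz-size} by examining the index differences of the closed‑form kernel; I describe the argument for \eqref{eq:Riesz-smooth-1}, the proof of \eqref{eq:Riesz-smooth-2} being analogous with the difference taken in the $(\alpha+1,\beta)$-family instead. First I would write
\[
R(m+2,n)-R(m,n)=\int_{-1}^{1}(1-x)^{1/2}\bigl(p_{m+2}^{(\alpha,\beta)}(x)-p_m^{(\alpha,\beta)}(x)\bigr)\,p_n^{(\alpha+1,\beta)}(x)\,d\mu_{\alpha,\beta}(x).
\]
The key point is that the step‑two index difference $p_{m+2}^{(\alpha,\beta)}-p_m^{(\alpha,\beta)}$ is, pointwise, smaller than $p_m^{(\alpha,\beta)}$ by an extra order of vanishing at the endpoints $x=\pm1$: using the contiguous relations for Jacobi polynomials (cf. \cite{NIST}, or the last section) one can write $p_{m+2}^{(\alpha,\beta)}-p_m^{(\alpha,\beta)}$ as a combination, with coefficients bounded uniformly in $m$, of terms of the form $(1\pm x)$ — or $(1-x^2)$ — times a Jacobi polynomial of degree $\asymp m$ with parameters shifted by one; and near $x=\pm1$ the Mehler--Heine expansion of the difference has vanishing leading term, which produces the same kind of gain in those regimes.

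Granting this, $R(m+2,n)-R(m,n)$ is a bounded linear combination of integrals of the type treated in Proposition~\ref{propo:Riesz-size}, but with one extra factor $(1\pm x)$ in the integrand, so I would run the same computation. Passing to $x=\cos\theta$, split $[0,\pi]$ into the two turning‑point caps $\theta\le c/M$, $\pi-c/M\le\theta$ (with $M=\max(m,n)$) and the bulk $c/M\le\theta\le\pi-c/M$, as dictated by the three cases of \eqref{eq:unif-bound-trozos}. In the bulk, inserting the Darboux (Hilb‑type) asymptotics, the non‑oscillatory amplitude becomes a smooth function which, thanks to the $(1\pm x)$ factor, vanishes at the relevant edge(s) of the bulk interval at the scale $1/M$; hence two integrations by parts in $\theta$ are legitimate and yield $O(|m-n|^{-2})$ — the first boundary term is $\lesssim (M|m-n|)^{-1}\le|m-n|^{-2}$ since $|m-n|\le M$, and the $(m+n)$-frequency part is smaller still. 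In the caps, where $M^{-2}\lesssim|m-n|^{-2}$, the pointwise bounds \eqref{eq:unif-bound-trozos} together with the $(1\pm x)$-factor ($\asymp\theta^{2}$ near $0$, $\asymp(\pi-\theta)^{2}$ near $\pi$, or a product of these) suffice; for the pieces carrying only one such factor one instead keeps the difference $p_{m+2}^{(\alpha,\beta)}-p_m^{(\alpha,\beta)}$ together and uses its Mehler--Heine expansion. Adding the cap and bulk contributions gives \eqref{eq:Riesz-smooth-1}.

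I expect the main obstacle to be carrying these estimates out \emph{uniformly} in $m,n$: controlling the remainders in the Darboux and Mehler--Heine expansions, handling the transition region $\theta\asymp 1/M$ where the asymptotics change form, and — this is the delicate point in the caps — treating the index difference $p_{m+2}^{(\alpha,\beta)}-p_m^{(\alpha,\beta)}$ as a single object rather than bounding the two polynomials separately, since the naive bound gives only $O(M^{-1})$ and fails once $|m-n|\gg M^{1/2}$. These uniform Jacobi‑polynomial estimates are precisely what the technical section of the paper must supply. A more streamlined alternative is to subordinate $R$ to the heat kernel of \cite{ACL-JacI}: from $(1-x)^{1/2}=(1-x)(1-x)^{-1/2}$, the formula $(1-x)^{-1/2}=\pi^{-1/2}\int_0^{\infty}t^{-1/2}e^{-(1-x)t}\,dt$, Fubini, and $\delta p_n^{(\alpha,\beta)}(x)=(1-x)p_n^{(\alpha+1,\beta)}(x)$ one gets
\[
R(m,n)=\frac{1}{\sqrt{\pi}}\int_0^{\infty}t^{-1/2}\bigl(d_n\,K_t^{(\alpha,\beta)}(m,n)-e_n\,K_t^{(\alpha,\beta)}(m,n+1)\bigr)\,dt,
\]
which converges because the subtraction supplies an extra factor $1-x$ making the integrand $O(t^{-1})$ as $t\to\infty$; then \eqref{eq:Riesz-smooth-1} and \eqref{eq:Riesz-smooth-2} follow by integrating the smoothness estimates for $K_t^{(\alpha,\beta)}$ against $t^{-1/2}\,dt$, splitting $\int_0^{\infty}=\int_0^{1}+\int_1^{\infty}$ and using $|K_t^{(\alpha,\beta)}(m,n)|\lesssim t^{1/2}|m-n|^{-2}$ for small $t$ and $|K_t^{(\alpha,\beta)}(m,n)|\lesssim t^{-1/2}$ for large $t$ (both recalled in Section~\ref{sec:frac-int}), together with their one‑step‑difference refinements.
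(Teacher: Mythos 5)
Your proposal has genuine gaps in both of its routes, and the central quantitative step is missing in each. In the main route, the structural claim about the index difference is overstated: $p_{m+2}^{(\alpha,\beta)}-p_m^{(\alpha,\beta)}$ does \emph{not} gain a full factor $(1\pm x)$ over $p_m^{(\alpha,\beta)}$. The contiguous relation (the one the paper uses to prove Lemma \ref{lem:bound-diff}) gives $P_{m+1}^{(a,b)}-P_m^{(a,b)}=\tfrac{1}{m+1}\bigl(-\tfrac{2m+a+b+2}{2}(1-x)P_m^{(a+1,b)}+aP_m^{(a,b)}\bigr)$, so the difference is an $O(1)$ multiple of $(1-x)p_m^{(a+1,b)}$ plus an $O(1/m)$ term; since $p_m^{(a+1,b)}$ is larger than $p_m^{(a,b)}$ by a factor $(1-x)^{-1/2}$ near $x=1$, the net gain is only $\sqrt{(1-x)(1+x)}$, which is exactly what Lemma \ref{lem:bound-diff} states. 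With only this half-power gain, your cap estimates and the ``two integrations by parts in $\theta$'' do not automatically give $O(|m-n|^{-2})$: one needs uniform control of the Hilb/Darboux amplitudes, their first and second derivatives, the error terms, the $(m+n)$- and $(m-n)$-frequency components, and the boundary terms at $\theta\asymp 1/M$ — precisely the estimates you yourself flag as ``the main obstacle'' and do not carry out. The paper avoids this oscillatory-integral machinery altogether: it obtains the second power of $|m-n|$ by applying the Sturm--Liouville identity \eqref{eq:L-parts} twice (through \eqref{eq:L-product-2} and \eqref{eq_Lpols}), which produces the denominator $(\lambda_{m+2}^{(\alpha,\beta)}-\lambda_n^{(\alpha+1,\beta)})^{2}$, and then controls the resulting boundary and integral terms with \eqref{eq:unif-bound-trozos} and Lemmas \ref{lem:bound-diff} and \ref{lem:bound-diff-der}.

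The ``streamlined alternative'' via subordination is also incomplete at the decisive point. The formula $R(m,n)=\pi^{-1/2}\int_0^\infty t^{-1/2}\bigl(d_nK_t^{(\alpha,\beta)}(m,n)-e_nK_t^{(\alpha,\beta)}(m,n+1)\bigr)\,dt$ is correct, and the $O(t^{-1})$ decay of the bracket for large $t$ does give convergence. But the only kernel bounds recalled in Section \ref{sec:frac-int} are $|K_t^{(\alpha,\beta)}(m,n)|\le Ct^{1/2}|m-n|^{-2}$ and $|K_t^{(\alpha,\beta)}(m,n)|\le Ct^{-1/2}$; the latter carries no decay in $|m-n|$, and even the optimal interpolation of these two bounds fails to produce $|m-n|^{-1}$ for the $t\ge 1$ part of the integral, let alone the $|m-n|^{-2}$ needed for \eqref{eq:Riesz-smooth-1} and \eqref{eq:Riesz-smooth-2}. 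The ``one-step-difference refinements'' with joint decay in $t$ and $|m-n|$ that your argument invokes are neither stated in this paper nor in the portion of \cite{ACL-JacI} it quotes, and proving them is essentially of the same difficulty as the direct estimates of Propositions \ref{propo:Riesz-size} and \ref{propo:Riesz-smooth}. So in both routes the proof stops exactly where the hard work begins.
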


The proofs of the previous propositions are the most delicate points of the paper and they are postponed to the next section.

Finally, we state the next lemma concerning $A_p(\mathbb{N})$ weights, see \cite[Lemma 2.2]{ACL-Trans}, before giving the proof of Theorem \ref{th:main}.
\begin{lem}
\label{lem:weight}
  Let $1\le p <\infty$ and $w\in A_p(\mathbb{N})$. Then, $w(n)\simeq w(n+1)$.
\end{lem}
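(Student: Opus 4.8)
The plan is to prove Lemma~\ref{lem:weight} directly from the definition of the $A_p(\mathbb{N})$ condition by choosing the optimal interval in the supremum. Fix $n\in\mathbb{N}$ and apply the $A_p$ inequality to the pair $(n,n+1)$, i.e.\ to the interval $\{n,n+1\}$ of two consecutive integers, for which $m-n+1=2$. For $1<p<\infty$ this yields
\[
\frac{1}{2^p}\bigl(w(n)+w(n+1)\bigr)\bigl(w(n)^{-1/(p-1)}+w(n+1)^{-1/(p-1)}\bigr)^{p-1}\le [w]_{A_p},
\]
where $[w]_{A_p}$ denotes the (finite) $A_p$ constant of $w$. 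Since every term on the left is positive, we may keep only a piece of each factor: bounding the first factor below by $w(n+1)$ and the second below by $w(n)^{-1}$ (dropping the nonnegative summands) gives
\[
\frac{1}{2^p}\,w(n+1)\,w(n)^{-1}\le [w]_{A_p},
\]
hence $w(n+1)\le 2^p [w]_{A_p}\, w(n)$. Swapping the roles of the two points, or equivalently bounding the first factor below by $w(n)$ and the second below by $w(n+1)^{-1}$, gives $w(n)\le 2^p [w]_{A_p}\, w(n+1)$. Together these two estimates say precisely $w(n)\simeq w(n+1)$ with comparability constant $2^p[w]_{A_p}$, independent of $n$.

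For the endpoint case $p=1$ the argument is the same but shorter: applying the $A_1(\mathbb{N})$ condition to $\{n,n+1\}$ yields
\[
\frac{1}{2}\bigl(w(n)+w(n+1)\bigr)\max\{w(n)^{-1},w(n+1)^{-1}\}\le [w]_{A_1},
\]
and since $\max\{w(n)^{-1},w(n+1)^{-1}\}\ge w(n)^{-1}$ and $w(n)+w(n+1)\ge w(n+1)$, we again get $w(n+1)\le 2[w]_{A_1}\,w(n)$, and symmetrically $w(n)\le 2[w]_{A_1}\,w(n+1)$.

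There is essentially no obstacle here; the only thing to be mildly careful about is that the supremum defining $A_p(\mathbb{N})$ ranges over all pairs $0\le n\le m$, so the two-point interval $\{n,n+1\}$ is always admissible and the estimate for a single pair is automatically dominated by the global constant. In fact one does not even need a dedicated proof, since this is a standard and elementary property of discrete Muckenhoupt weights; we have included the short argument for completeness and will simply cite \cite[Lemma~2.2]{ACL-Trans} in the text.
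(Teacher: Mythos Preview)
Your proof is correct and is exactly the standard two-point argument one expects. The paper itself does not give a proof of this lemma at all; it simply cites \cite[Lemma~2.2]{ACL-Trans}, which almost certainly contains the same elementary computation you wrote out.
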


\begin{proof}[Proof of Theorem \ref{th:main}]
First, we will see that $\mathcal{R}$ is bounded operator from $\ell^2(\mathbb{N})$ into itself. To this end, by denseness, it is enough to consider sequences in $(\mathbb{C})_0^{\mathbb{N}}$, so \eqref{eq:def-R} can be used.

As it is well known, for each function $f\in L^2([-1,1],d\mu_{\alpha,\beta})$ its Fourier-Jacobi coefficients are given by
\[
c_m^{(\alpha,\beta)}(f)=\int_{-1}^{1}f(x)p_m^{(\alpha,\beta)}(x)\,d\mu_{\alpha,\beta}(x)
\]
and
\[
f(x)=\sum_{m=0}^{\infty}c_m^{(\alpha,\beta)}(f)p_m^{(\alpha,\beta)}(x),
\]
where the equality holds in $ L^2([-1,1],d\mu_{\alpha,\beta})$. Moreover, $\{c_m^{(\alpha,\beta)}(f)\}_{m\ge 0}$ is a sequence in $\ell^{2}(\mathbb{N})$. Conversely, for each sequence $f\in \ell^2(\mathbb{N})$, the function
\begin{equation}
\label{eq:Jac-transform}
F(x)=\sum_{m=0}^{\infty}f(m)p_m^{(\alpha,\beta)}(x)
\end{equation}
belongs to $L^2([-1,1],d\mu_{\alpha,\beta})$ and Parseval's identity
\begin{equation}
\label{eq:parse}
\|f\|_{\ell^2(\mathbb{N})}=\|F\|_{L^2([-1,1],d\mu_{\alpha,\beta})}
\end{equation}
holds. 
Therefore, noting that
\begin{align*}
\mathcal{R}f(n)¨&=\int_{-1}^{1}(1-x)^{1/2}p_n^{(\alpha+1,\beta)}(x)F(x)\, d\mu_{\alpha,\beta}(x)\\&=c_n^{(\alpha+1,\beta)}((1-\cdot)^{-1/2}F),
\end{align*}
where $F$ is defined as in \eqref{eq:Jac-transform}, by \eqref{eq:parse} we have
\begin{align*}
\|\mathcal{R}f\|_{\ell^{2}(\mathbb{N})}&=\|c_n^{(\alpha+1,\beta)}((1-\cdot)^{-1/2}F)\|_{\ell^{2}(\mathbb{N})}\\
&=\|(1-\cdot)^{-1/2}F\|_{L^2([-1,1],d\mu_{\alpha+1,\beta})}=\|F\|_{L^2([-1,1],d\mu_{\alpha,\beta})}=\|f\|_{\ell^2(\mathbb{N})}
\end{align*}
and then $\mathcal{R}$ is a bounded operator from $\ell^2(\mathbb{N})$ into itself.

Now, we note that it is possible to split the $m$ variable into its even and odd parts, so we have
\begin{equation*}
\mathcal{R} f(n) = \sum_{m=0}^{\infty} f(2m) R(2m,n) + \sum_{m=0}^{\infty} f(2m+1) R(2m+1,n),
\end{equation*}
which motivates the following definitions
\begin{align*}
{}^{\text{e,e}}\mathcal{R}f(n) &= \sum_{m=0}^{\infty} f(m) {}^{\text{e,e}}R(m,n), & {}^{\text{e,e}}R(m,n) &= R(2m,2n),\\
{}^{\text{e,o}}\mathcal{R}f(n) &= \sum_{m=0}^{\infty} f(m) {}^{\text{e,o}}R(m,n), & {}^{\text{e,o}}R(m,n) &= R(2m+1,2n),\\
{}^{\text{o,e}}\mathcal{R}f(n) &= \sum_{m=0}^{\infty} f(m) {}^{\text{o,e}}R(m,n), & {}^{\text{o,e}}R(m,n) &= R(2m,2n+1),
\intertext{and}
{}^{\text{o,o}}\mathcal{R}f(n) &= \sum_{m=0}^{\infty} f(m) {}^{\text{o,o}}R(m,n), & {}^{\text{o,o}}R(m,n) &= R(2m+1,2n+1).
\end{align*}
Hence, we obtain that
\begin{equation*}
\mathcal{R}f(2n) = {}^{\text{e,e}}\mathcal{R} \tilde{f}(n) + {}^{\text{e,o}}\mathcal{R} \hat{f}(n),
\end{equation*}
and
\begin{equation*}
\mathcal{R} f(2n+1) = {}^{\text{o,e}}\mathcal{R} \tilde{f}(n) + {}^{\text{o,o}}\mathcal{R} \hat{f}(n),
\end{equation*}
with $\tilde{f}(n) = f(2n)$ and $\hat{f}(n) = f(2n+1)$, $n\in\mathbb{N}$. In addition, note that $^{\text{e,e}}\mathcal{R}$, $^{\text{e,o}}\mathcal{R}$, $^{\text{o,e}}\mathcal{R}$, and $^{\text{o,o}}\mathcal{R}$ are bounded operators in $\ell^{2}(\mathbb{N})$ because so is $\mathcal{R}$. Indeed, let us define the functions
\begin{equation*}
g(n) = f(n/2)\chi_{\mathcal{E}}(n)
\qquad \text{ and } \qquad
h(n) = f((n-1)/2)\chi_{\mathcal{O}}(n),
\end{equation*}
where $\mathcal{E}$ and $\mathcal{O}$ denotes the sets of even and odd numbers respectively. We have then that  ${}^{\text{e,e}}\mathcal{R} f(n) = \mathcal{R} g(2n)$, ${}^{\text{e,o}}\mathcal{R} f(n) = \mathcal{R} h(2n)$, ${}^{\text{o,e}}\mathcal{R} f(n) = \mathcal{R} g(2n+1)$, and ${}^{\text{o,o}}\mathcal{R} f(n) = \mathcal{R} h(2n+1)$, so the boundedness in $\ell^2(\mathbb{N})$ of each operator follows immediately.

Therefore, it is enough to prove that the kernels ${}^{\text{e,e}}R$, ${}^{\text{e,o}}R$, ${}^{\text{o,e}}R$, and ${}^{\text{o,o}}R$ satisfy  properties (a) and (b). These facts are immediate consequences of Propositions \ref{propo:Riesz-size} and \ref{propo:Riesz-smooth}.

In this way, by Theorem \ref{thm:CZ} and taking the weights $w_e(n)=w(2n)$ and $w_o(n)=w(2n+1)$ (note that both of them belongs to $A_p(\mathbb{N})$ because $w\in A_p(\mathbb{N})$), for $1<p<\infty$ we have
\[
\|{}^{\text{e,e}}\mathcal{R} \tilde{f}\|_{\ell^p(\mathbb{N},w_e)}\le \|\tilde{f}\|_{\ell^p(\mathbb{N},w_e)},
\]
\[
\|{}^{\text{e,o}}\mathcal{R} \hat{f}\|_{\ell^p(\mathbb{N},w_e)}\le \|\hat{f}\|_{\ell^p(\mathbb{N},w_e)},
\]
\[
\|{}^{\text{o,e}}\mathcal{R} \tilde{f}\|_{\ell^p(\mathbb{N},w_o)}\le \|\tilde{f}\|_{\ell^p(\mathbb{N},w_o)},
\]
\[
\|{}^{\text{o,o}}\mathcal{R} \hat{f}\|_{\ell^p(\mathbb{N},w_o)}\le \|\hat{f}\|_{\ell^p(\mathbb{N},w_o)},
\]
and the corresponding weak inequalities for $p=1$. To complete the proof, it is enough to observe that, by Lemma \ref{lem:weight},
\[
\|\hat{f}\|_{\ell^p(\mathbb{N},w_e)}\le C \|\hat{f}\|_{\ell^p(\mathbb{N},w_o)}\le C\|f\|_{\ell^p(\mathbb{N},w)}
\]
and
\[
\|\tilde{f}\|_{\ell^p(\mathbb{N},w_o)}\le C \|\hat{f}\|_{\ell^p(\mathbb{N},w_e)}\le C\|f\|_{\ell^p(\mathbb{N},w)}.\qedhere
\]
\end{proof}

%%%%%%%%%%%%%%%%%%%%%%%%%%%%%%%%%%%%%%%%%%%%%%%%%%%%%%%%%%%%%%%
\section{Proof of Propositions \ref{propo:Riesz-size} and \ref{propo:Riesz-smooth}}
\label{sec:estimates}
%%%%%%%%%%%%%%%%%%%%%%%%%%%%%%%%%%%%%%%%%%%%%%%%%%%%%%%%%%%%%%%

\begin{proof}[Proof of Proposition \ref{propo:Riesz-size}]
First we note that
\[
L^{a,b}p_n^{(a,b)}(x)=\lambda_n^{(a,b)}p_n^{(a,b)}(x),
\]
with $\lambda_{n}^{(a,b)}=n(n+a+b+1)$ and
\[
L^{a,b}=-(1-x^2)\frac{d^2}{dx^2}-(b-a-(a+b+2)x)\frac{d}{dx}.
\]
It is well known that $L^{a,b}$ is a symmetric operator in $L^2([-1,1],d\mu_{a,b})$, but for some interval $[r,s]\subset [-1,1]$, $r<s$, it is verified that
\begin{equation}
\label{eq:L-parts}
\int_{r}^{s}f(x)L^{a,b}g(x)\, d\mu_{a,b}(x)=U_{a,b}(f,g)(x)\Big|_{x=r}^{x=s}+\int_{r}^{s}g(x)L^{a,b}f(x)\, d\mu_{a,b}(x),
\end{equation}
with
\[
U_{a,b}(f,g)(x)=(1-x)^{a+1}(1+x)^{b+1}\Big(g(x)\frac{df}{dx}(x)-f(x)\frac{dg}{dx}(x)\Big).
\]
Moreover,
\begin{multline}
\label{eq:L-product}
L^{a,b}(h_1h_2)(x)=h_2(x)L^{a+1,b}h_1(x)-(1+x)h_2(x)\frac{dh_1(x)}{dx}-2(1-x^2)\frac{dh_1(x)}{dx}\frac{dh_2(x)}{dx}\\-(1-x^2)h_1(x)\frac{d^2h_2(x)}{dx^2}-
(b-a-(a+b+2)x)h_1(x)\frac{dh_2(x)}{dx}
\end{multline}
and
\begin{multline}
\label{eq:L-product-2}
L^{a+1,b}(h_1h_2)(x)=h_2(x)L^{a,b}h_1(x)+(1+x)h_2(x)\frac{dh_1}{dx}(x)-2(1-x^2)\frac{dh_1}{dx}(x)\frac{dh_2}{dx}(x)\\-(1-x^2)h_1(x)\frac{d^2h_2}{dx^2}(x)-(b-a-1-(a+b+3)x)h_1(x)\frac{dh_2}{dx}(x).
\end{multline}

First, we suppose that $n>m$. We decompose $R(m,n)$ according to the intervals $I_1=(-1,-1+1/(n+1)^2)$, $I_2=[-1+1/(n+1)^2, 1-1/(n+1)^2]$, and $I_3=(1-1/(n+1)^2,1)$ and denote the corresponding integrals by $R_1(m,n)$, $R_2(m,n)$, and $R_3(m,n)$. From \eqref{eq:unif-bound-trozos}, for $\alpha,\beta\ge -1/2$, we have
\[
|R_1(m,n)|\le C (n+1)^{\beta+1/2}(m+1)^{\beta+1/2}\int_{I_1} (1+x)^\beta\, dx\le \frac{C}{n+1}
\]
and
\[
|R_3(m,n)|\le C (n+1)^{\alpha+3/2}(m+1)^{\alpha+1/2}\int_{I_3}(1-x)^{\alpha+1/2}\, dx\le \frac{C}{n+1},
\]
and these estimates are enough to prove \eqref{eq:Riesz-size}.

Let us focus on $R_2(m,n)$. We consider the notation
\[
J(m,n)=\int_{I_2}H_{\alpha,\beta}(x)p_n^{(\alpha+1,\beta)}(x)p_m^{(\alpha,\beta)}(x)\,d\mu_{\alpha,\beta}(x),
\]
with
\begin{equation}
\label{eq:H.function}
H_{\alpha,\beta}(x)=\frac{2\beta-2\alpha+1-(2\alpha+2\beta+3)x}{4(1-x)^{1/2}},
\end{equation}
and
\[
S(m,n)=U_{\alpha,\beta}((1-(\cdot))^{1/2}p_n^{(\alpha+1,\beta)},p_m^{(\alpha,\beta)})(x)\Big|_{x=-1+1/(n+1)^2}^{x=1-1/(n+1)^2}.
\]
To give a proper expression for the integral $R_2(m,n)$, we use \eqref{eq:L-parts}, with $f(x)=(1-x)^{1/2}p_n^{(\alpha+1,\beta)}(x)$ and $g(x)=p_m^{(\alpha,\beta)}(x)$, and \eqref{eq:L-product}, with $h_1(x)=p_n^{(\alpha+1,\beta)}(x)$ and $h_2(x)=(1-x)^{1/2}$. Then, we get that
\begin{align*}
\lambda_m^{(\alpha,\beta)}R_2(m,n)&=\int_{I_2}(1-x)^{1/2}p_n^{(\alpha+1,\beta)}(x)L^{\alpha,\beta}p_m^{(\alpha,\beta)}(x)\,d\mu_{\alpha,\beta}(x)
\\&=S(m,n)+\int_{I_2}L^{\alpha,\beta}((1-(\cdot))^{1/2}p_n^{(\alpha+1,\beta)})(x)p_m^{(\alpha,\beta)}(x)\,d\mu_{\alpha,\beta}(x)
\\&=S(m,n)+\lambda_n^{(\alpha+1,\beta)}R_{2}(m,n)+J(m,n).
\end{align*}
Therefore, noting that $\lambda_m^{(\alpha,\beta)}\not= \lambda_n^{(\alpha+1,\beta)}$,
\begin{equation}
\label{eq:R-size}
R_{2}(m,n)=\frac{S(m,n)+J(m,n)}{\lambda_m^{(\alpha,\beta)}-\lambda_n^{(\alpha+1,\beta)}}.
\end{equation}
Now, we use the identities (see~\cite[18.9.15]{NIST})
\begin{equation}
\label{eq:Jaco-der}
\frac{d P_n^{(a,b)}}{dx}(x)=\frac{n+a+b+1}{2}P_{n-1}^{(a+1,b+1)}(x),\qquad n> 0,
\end{equation}
and
\begin{equation}
\label{eq:Jaco-der0}
\frac{d P_0^{(a,b)}}{dx}(x)=0,
\end{equation}
the estimate \eqref{eq:unif-bound-trozos}, and the restrictions $\alpha,\beta\geq -1/2$ to obtain that
\begin{equation}
\label{eq:S}
|S(m,n)|\le C (n+1).
\end{equation}
In order to estimate the term $J(m,n)$ we decompose it according to the intervals $V_1=[-1+1/(n+1)^2,-1+1/(m+1)^2)$, $V_2=[-1+1/(m+1)^2,1-1/(m+1)^2]$, and $V_3=(1-1/(m+1)^2,1-1/(n+1)^2]$. We denote the corresponding integrals by $J_1(m,n)$, $J_2(m,n)$, and $J_3(m,n)$. In this way, using \eqref{eq:unif-bound-trozos}, the estimate $|H_{\alpha,\beta}(x)|\le C (1-x)^{-1/2}$ for $-1<x<1$, and the condition $\alpha,\beta\ge -1/2$, we deduce the bounds
\begin{align*}
|J_1(m,n)|&\le C (m+1)^{\beta+1/2}\int_{V_1}(1+x)^{\beta/2-1/4}\, dx\\&\le C\int_{V_1}(1+x)^{-1/2}\,dx\le C,
\end{align*}
\begin{align*}
|J_2(m,n)|&\le C \int_{V_2}(1+x)^{-1/2}(1-x)^{-3/2}\, dx\\&\le C (m+1),
\end{align*}
and 
\begin{align*}
|J_3(m,n)|&\le C (m+1)^{\alpha+1/2}\int_{V_3}(1-x)^{\alpha/2-5/4}\, dx\\&\le C \int_{V_3}(1-x)^{-3/2}\,dx\le C (n+1).
\end{align*}
Then, we have
\begin{equation}
\label{eq:J}
|J(m,n)|\le C (n+1),
\end{equation}
and, from \eqref{eq:R-size}, \eqref{eq:S}, and \eqref{eq:J}, we obtain that $|R_2(m,n)|\le C|n-m|^{-1}$ and the estimate \eqref{eq:Riesz-size} is proved for $n>m$.

The case $n<m$ follows from the above argument by interchanging the roles of $n$ and $m$ but we include some details for the sake of completeness.

We decompose $R(m,n)$ according to the intervals $I'_1=(-1,-1+1/(m+1)^2)$, $I'_2=[-1+1/(m+1)^2, 1-1/(m+1)^2]$, and $I'_3=(1-1/(m+1)^2,1)$ and denote the corresponding integrals by $R'_1(m,n)$, $R'_2(m,n)$, and $R'_3(m,n)$. By similar arguments than above we obtain that
\[
|R'_1(m,n)|\le \frac{C}{m+1}\qquad \text{ and }\qquad |R'_3(m,n)|\le \frac{C}{m+1}.
\]
Now, for $R_2'(m,n)$, by using \eqref{eq:L-product-2} and noting again that $\lambda_m^{(\alpha,\beta)}\not= \lambda_n^{(\alpha+1,\beta)}$, we deduce the identity
\begin{equation}
\label{eq:R'-size}
R'_{2}(m,n)=\frac{S'(m,n)-J'(m,n)}{\lambda_n^{(\alpha+1,\beta)}-\lambda_m^{(\alpha,\beta)}},
\end{equation}
where
\[
J'(m,n)=\int_{I'_2}H_{\alpha,\beta}(x)p_n^{(\alpha+1,\beta)}(x)p_m^{(\alpha,\beta)}(x)\,d\mu_{\alpha,\beta}(x),
\]
with $H_{\alpha,\beta}$ as in \eqref{eq:H.function},
and
\[
S'(m,n)=U_{\alpha+1,\beta}((1-(\cdot))^{-1/2}p_m^{(\alpha,\beta)},p_n^{(\alpha+1,\beta)})(x)\Big|_{x=-1+1/(m+1)^2}^{x=1-1/(m+1)^2}.
\]
As in the previous case, we deduce the estimate
\begin{equation}
\label{eq:Sprim}
|S'(m,n)|\leq (m+1).
\end{equation}
To analyze $J'(m,n)$ we decompose it according to the intervals $V'_1=[-1+1/(m+1)^2,-1+1/(n+1)^2)$, $V'_2=[-1+1/(n+1)^2,1-1/(n+1)^2]$, and $V'_3=(1-1/(n+1)^2,1-1/(m+1)^2]$. The corresponding integrals are denoted by $J_{1}'(m,n)$, $J_{2}'(m,n)$, and $J_{3}'(m,n)$, and we have
\begin{equation*}
|J_{1}'(m,n)|\leq C,\qquad |J_{2}'(m,n)|\le C(n+1),\qquad \text{ and }\qquad |J_{3}'(m,n)|\leq C (m+1).
\end{equation*}
Therefore
\begin{equation}
\label{eq:J'}
|J'(m,n)|\leq C(m+1).
\end{equation}
Then \eqref{eq:Riesz-size} is also proved for $n<m$ and the proof of the proposition is finished.
\end{proof}

In the proof of the Proposition \ref{propo:Riesz-smooth} we will use the following lemmas.

\begin{lem}
\label{lem:bound-diff}
  Let $n\in\mathbb{N}$ and $a,b> -1$, then
\begin{multline*}
%\label{eq:bound-diff}
|p_{n+2}^{(a,b)}(x)-p_{n}^{(a,b)}(x)|\\\le C
\begin{cases}
(n+1)^{a-1/2}, & 1-1/(n+1)^2<x<1,\\
(1-x)^{-a/2+1/4}(1+x)^{-b/2+1/4}, & -1+1/(n+1)^2\le x\leq1-1/(n+1)^{2},\\
(n+1)^{b-1/2}, & -1<x<-1+1/(n+1)^2.
\end{cases}
\end{multline*}
\end{lem}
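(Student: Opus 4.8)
The plan is to prove Lemma \ref{lem:bound-diff} by reducing the difference $p_{n+2}^{(a,b)}(x)-p_n^{(a,b)}(x)$ to a quantity that can be controlled directly by the known pointwise bound \eqref{eq:unif-bound-trozos} for Jacobi polynomials, paying attention to the gain of one power of $(n+1)^{-1}$ relative to the size estimate. The natural tool is the fact that $p_{n+2}^{(a,b)}$ and $p_n^{(a,b)}$ share the same Sturm--Liouville operator structure: since $L^{a,b}p_k^{(a,b)}=\lambda_k^{(a,b)}p_k^{(a,b)}$ with $\lambda_k^{(a,b)}=k(k+a+b+1)$, and $\lambda_{n+2}^{(a,b)}-\lambda_n^{(a,b)}\simeq n+1$ for large $n$, one expects that a telescoping/connection-coefficient argument expresses $p_{n+2}^{(a,b)}-p_n^{(a,b)}$ in terms of lower-degree Jacobi polynomials (of possibly shifted parameters) with coefficients of size $O(n+1)$ against a spectral gap of size $\simeq (n+1)^2$. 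Concretely, I would first record that the three-term recurrence for the normalized Jacobi polynomials, or equivalently the relation $J^{(a,b)}p_k^{(a,b)}(x)=xp_k^{(a,b)}(x)$ together with the contiguous-parameter identities from \cite{NIST} (such as $\frac{dP_n^{(a,b)}}{dx}=\frac{n+a+b+1}{2}P_{n-1}^{(a+1,b+1)}$ and the ones already invoked in the proof of Proposition \ref{propo:Riesz-size}), lets us write $p_{n+2}^{(a,b)}-p_n^{(a,b)}$ as a short linear combination — with coefficients that are $O(1)$ uniformly in $n$ — of terms of the form $(1-x)^{c}(1+x)^{d}p_{j}^{(a',b')}(x)$ where $j\in\{n-1,n,n+1\}$, $|a'-a|,|b'-b|\le 1$, and the exponents $c,d$ produce exactly the claimed shift by $1/2$ in the middle range and the claimed loss of one power of $(n+1)$ at the endpoints.

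In more detail, the cleanest route I would take is the integral-representation route that mirrors the rest of the paper. Write $p_{n+2}^{(a,b)}(x)-p_n^{(a,b)}(x)=\int$ of a derivative in the degree parameter, but since the degree is discrete this means telescoping: $p_{n+2}^{(a,b)}-p_n^{(a,b)}=(p_{n+2}^{(a,b)}-p_{n+1}^{(a,b)})+(p_{n+1}^{(a,b)}-p_n^{(a,b)})$, and then use a single-step difference formula. A single-step difference $p_{k+1}^{(a,b)}-p_k^{(a,b)}$ can be handled via the known formula relating consecutive Jacobi polynomials through $(1-x)$ or $(1+x)$ times a Jacobi polynomial of shifted parameter; specifically one has identities of the type $(1-x)p_k^{(a+1,b)}(x) = c_k\, p_k^{(a,b)}(x) - d_k\, p_{k+1}^{(a,b)}(x)$ with $c_k,d_k$ bounded (they come from the normalized version of \cite[18.9.5--18.9.6]{NIST}). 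Iterating or combining such relations, the two-step difference becomes a bounded-coefficient combination of $(1-x)p_{j}^{(a+1,b)}(x)$ and $(1+x)p_j^{(a,b+1)}(x)$-type terms for $j$ near $n$. Applying \eqref{eq:unif-bound-trozos} to each such term with parameters $(a+1,b)$ or $(a,b+1)$: in the middle range $(1-x)\cdot(1-x)^{-(a+1)/2-1/4}(1+x)^{-b/2-1/4} = (1-x)^{-a/2+1/4}(1+x)^{-b/2-1/4}$, and symmetrically $(1+x)\cdot(1-x)^{-a/2-1/4}(1+x)^{-(b+1)/2-1/4}=(1-x)^{-a/2-1/4}(1+x)^{-b/2+1/4}$; taking the larger of the two (or noting both are dominated by the stated bound on $I_2$ after possibly losing a harmless constant) yields the middle-range estimate $(1-x)^{-a/2+1/4}(1+x)^{-b/2+1/4}$ as claimed. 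At the right endpoint region $1-1/(n+1)^2<x<1$, the factor $(1-x)$ contributes a power $\lesssim (n+1)^{-2}$, while the Jacobi factor with shifted parameter contributes $(n+1)^{(a+1)+1/2}=(n+1)^{a+3/2}$, for a net $(n+1)^{a-1/2}$; symmetrically at the left endpoint one gets $(n+1)^{b-1/2}$. These are exactly the three cases in the statement.

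The step I expect to be the main obstacle is organizing the algebra of the contiguous-parameter identities so that the coefficients are genuinely bounded uniformly in $n$ and so that the telescoped two-step difference collapses to terms all of which simultaneously carry the needed $(1-x)$ or $(1+x)$ factor — a naive application of one-step recurrences produces, besides the good shifted-parameter terms, leftover multiples of $p_n^{(a,b)}$ itself, which only satisfy the weaker size bound \eqref{eq:unif-bound-trozos} and would not give the claimed improvement. Resolving this requires choosing the right combination: the point is that the specific combination $p_{n+2}^{(a,b)}-p_n^{(a,b)}$ (two steps, same parity of degree relative to the recurrence) is what makes the unwanted $p_n^{(a,b)}$ contributions cancel, leaving only terms with an explicit vanishing factor at one of the endpoints. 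Once that cancellation is arranged, the rest is a bookkeeping exercise with \eqref{eq:unif-bound-trozos}. An alternative, possibly more robust, way to force the cancellation is to use the Sturm--Liouville approach as in Proposition \ref{propo:Riesz-size}: write the difference against a test polynomial, integrate by parts twice using \eqref{eq:L-parts}, and use $\lambda_{n+2}^{(a,b)}-\lambda_n^{(a,b)}\simeq n+1$ in the denominator to extract the extra decay; but for a pointwise (not integrated) bound the connection-coefficient route is more direct, so that is the one I would carry out, with the cancellation identity as the technical heart of the argument.
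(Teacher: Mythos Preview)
Your overall plan --- telescope $p_{n+2}^{(a,b)}-p_n^{(a,b)}$ into one-step differences and rewrite each via a contiguous-parameter identity carrying a factor $(1-x)$ or $(1+x)$ --- is exactly what the paper does. The gap is in your diagnosis of the ``leftover'' term. Solving $(1-x)p_k^{(a+1,b)}=c_k\,p_k^{(a,b)}-d_k\,p_{k+1}^{(a,b)}$ for the one-step difference gives
\[
p_{k+1}^{(a,b)}-p_k^{(a,b)}=\Bigl(\tfrac{c_k}{d_k}-1\Bigr)p_k^{(a,b)}-\tfrac{1}{d_k}(1-x)\,p_k^{(a+1,b)},
\]
and you assert that the two-step combination is what makes the first term cancel. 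It does not: summing the identities at $k=n$ and $k=n+1$ leaves $(\tfrac{c_n}{d_n}-1)p_n^{(a,b)}+(\tfrac{c_{n+1}}{d_{n+1}}-1)p_{n+1}^{(a,b)}$, two terms in different polynomials, with no parity miracle. The actual reason this term is harmless --- and the point the paper uses --- is that $\tfrac{c_k}{d_k}-1=O((k+1)^{-1})$ (compute it from the explicit $d_n,e_n$ in the paper, or equivalently note that in the non-normalized identity the leftover is $\tfrac{a}{n+1}P_n^{(a,b)}$). An $O((n+1)^{-1})$ multiple of $p_n^{(a,b)}$ already meets the target bound via \eqref{eq:unif-bound-trozos}, since $(n+1)^{-1}\le (1-x)^{1/2}$ on the middle range and $(n+1)^{-1}\cdot(n+1)^{a+1/2}=(n+1)^{a-1/2}$ near $x=1$. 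The paper makes this transparent by first peeling off the normalization, writing $p_{n+2}^{(a,b)}-p_n^{(a,b)}=\bigl(w_{n+2}^{(a,b)}/w_n^{(a,b)}-1\bigr)\,p_n^{(a,b)}+w_{n+2}^{(a,b)}\bigl(P_{n+2}^{(a,b)}-P_n^{(a,b)}\bigr)$ with the first coefficient $O((n+1)^{-1})$, and then handling each $P_{k+1}^{(a,b)}-P_k^{(a,b)}$ via the identity $(n+1)(P_{n+1}^{(a,b)}-P_n^{(a,b)})=-\tfrac{2n+a+b+2}{2}(1-x)P_n^{(a+1,b)}+aP_n^{(a,b)}$.

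A second, smaller issue: in the middle range the $(1-x)$-route yields $(1-x)^{-a/2+1/4}(1+x)^{-b/2-1/4}$ and the $(1+x)$-route yields $(1-x)^{-a/2-1/4}(1+x)^{-b/2+1/4}$; neither is dominated by the stated bound $(1-x)^{-a/2+1/4}(1+x)^{-b/2+1/4}$ on all of $I_2$, so ``taking the larger of the two'' does not work as written. The fix the paper uses is to prove the estimate only on $[0,1-1/(n+1)^2]$ via the $(1-x)$-identity (there $(1+x)\simeq 1$, so the $(1+x)$-exponent is irrelevant) and then recover $-1<x<0$ from the symmetry $P_n^{(a,b)}(-z)=(-1)^nP_n^{(b,a)}(z)$.
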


\begin{lem}
\label{lem:bound-diff-der}
  Let $n\in\mathbb{N}$, $a,b> -1$, then
\begin{multline*}
%\label{eq:bound-diff-der}
|(p_{n+2}^{(a,b)}-p_{n}^{(a,b)})'(x)|\le C (n+1)\\ \times
\begin{cases}
(n+1)^{a+1/2}, & 1-1/(n+1)^2<x<1,\\
(1-x)^{-a/2-1/4}(1+x)^{-b/2-1/4}, & -1+1/(n+1)^2\le x \leq 1-1/(n+1)^{2},\\
(n+1)^{b+1/2}, & -1<x<-1+1/(n+1)^2.
\end{cases}
\end{multline*}
\end{lem}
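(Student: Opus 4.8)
The plan is to prove Lemma \ref{lem:bound-diff-der} by reducing the derivative estimate to a size estimate for a difference of Jacobi polynomials in a shifted parameter family, exactly as one would hope to do for Lemma \ref{lem:bound-diff} but now using the differentiation formula \eqref{eq:Jaco-der}. Recall that $p_n^{(a,b)} = w_n^{(a,b)} P_n^{(a,b)}$, so first I would pass from the normalized polynomials to the classical ones, keeping careful track of the normalizing constants: by the asymptotics of $w_n^{(a,b)}$ (a ratio of Gamma functions) one has $w_n^{(a,b)} \simeq (n+1)^{1/2}$, and more importantly $w_{n+2}^{(a,b)}/w_n^{(a,b)} = 1 + O(1/(n+1))$, which will be needed to control the difference of the two normalizing factors. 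Then using \eqref{eq:Jaco-der}, $\frac{d}{dx}P_n^{(a,b)} = \frac{n+a+b+1}{2} P_{n-1}^{(a+1,b+1)}$, I can write
\[
(p_{n+2}^{(a,b)} - p_n^{(a,b)})'(x) = \frac{w_{n+2}^{(a,b)}(n+a+b+3)}{2} P_{n+1}^{(a+1,b+1)}(x) - \frac{w_n^{(a,b)}(n+a+b+1)}{2} P_{n-1}^{(a+1,b+1)}(x).
\]

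The second step is to rewrite the right-hand side as a single main term of the form $c_n\,(P_{n+1}^{(a+1,b+1)} - P_{n-1}^{(a+1,b+1)})$ plus an error term proportional to $P_{n-1}^{(a+1,b+1)}$ (or $P_{n+1}^{(a+1,b+1)}$), where the scalar $c_n$ is comparable to $(n+1)^{3/2}$ and the error coefficient, coming from the mismatch of the two prefactors $w_{n+2}(n+a+b+3)$ versus $w_n(n+a+b+1)$, is of size $O((n+1)^{1/2})$. For the main term I would invoke Lemma \ref{lem:bound-diff} applied to the polynomials $P_{n}^{(a+1,b+1)}$ at index $n-1$ (after converting back and forth between $P$ and $p$ in the parameters $(a+1,b+1)$, costing another factor $\simeq (n+1)^{1/2}$), and for the error term I would use the plain uniform bound \eqref{eq:unif-bound-trozos} for $P_{n-1}^{(a+1,b+1)} = p_{n-1}^{(a+1,b+1)}/w_{n-1}^{(a+1,b+1)}$. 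It only remains to check that, region by region, the resulting bounds collapse to the claimed ones. In the interior region $-1+1/(n+1)^2 \le x \le 1-1/(n+1)^2$: Lemma \ref{lem:bound-diff} for $(a+1,b+1)$ gives $(1-x)^{-(a+1)/2+1/4}(1+x)^{-(b+1)/2+1/4} = (1-x)^{-a/2-1/4}(1+x)^{-b/2-1/4}$, and the scalar factors $(n+1)^{3/2}$ (from $c_n$) times $(n+1)^{-1/2}$ (from $P\leftrightarrow p$ in the new parameters) combine to $(n+1)$, which is exactly the prefactor claimed; one also checks the error term is no larger. In the endpoint regions one has to note that the transition points for the index-$(n-1)$ polynomials $P^{(a+1,b+1)}$ are at $1 \mp 1/n^2 \simeq 1\mp 1/(n+1)^2$, so the regions match up to harmless constants, and there Lemma \ref{lem:bound-diff} gives $(n+1)^{(a+1)-1/2} = (n+1)^{a+1/2}$ near $x=1$ and $(n+1)^{b+1/2}$ near $x=-1$; again the scalar bookkeeping produces the overall factor $(n+1)$.

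I expect the main obstacle to be the careful handling of the prefactor mismatch: the two terms in the differentiated expression carry different normalizing constants and different linear-in-$n$ factors, and one must split them as "common factor times difference of polynomials" plus "difference of factors times a single polynomial" in a way where both pieces are controlled by the already-available lemmas without losing powers of $n$. This is where the estimate $w_{n+2}^{(a,b)}/w_n^{(a,b)} = 1 + O(1/(n+1))$ — together with $(n+a+b+3)/(n+a+b+1) = 1 + O(1/(n+1))$ — is essential: the error coefficient must genuinely be one power of $n$ smaller than the main coefficient, otherwise the error term would only satisfy the bound of Lemma \ref{lem:bound-diff} scaled by $(n+1)^{3/2}$ rather than $(n+1)$, which is too big near the endpoints. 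A secondary technical nuisance is the off-by-one in the index (using Lemma \ref{lem:bound-diff} at $n-1$ rather than $n$) and the shift in parameters $(a,b)\to(a+1,b+1)$, both of which only affect the argument through constants, but must be stated so that the transition points $1\mp 1/(n+1)^2$ in the statement match those naturally arising, $1\mp 1/n^2$. Finally, the degenerate cases $n=0$ (where $P_0'=0$ and the formula for the derivative of $P_{-1}$ does not apply) should be dispatched separately, directly from \eqref{eq:Jaco-der0} and the uniform bounds.
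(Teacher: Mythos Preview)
Your proposal is correct and follows essentially the same route as the paper: differentiate via \eqref{eq:Jaco-der}, split into a main term $c_n(p_{n+1}^{(a+1,b+1)}-p_{n-1}^{(a+1,b+1)})$ handled by Lemma~\ref{lem:bound-diff} and an error term with $O(1)$ coefficient handled by \eqref{eq:unif-bound-trozos}, treating $n=0$ separately via \eqref{eq:Jaco-der0}. The only cosmetic difference is that the paper carries out the decomposition directly in the normalized polynomials $p_n^{(a+1,b+1)}$ (so the main coefficient is $\simeq (n+1)$ and the error coefficient is $O(1)$ from the start), which spares you the extra $P\leftrightarrow p$ conversion step you describe.
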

We postpone the proof of these two lemmas to the last section of the paper.

\begin{proof}[Proof of Proposition \ref{propo:Riesz-smooth}]
We will prove the estimate \eqref{eq:Riesz-smooth-1} for $n>m$ and \eqref{eq:Riesz-smooth-2} for $n<m$. The remaining two cases can be treated in a similar way and we omit the details.

In this way, we first assume that $n>m$ and prove \eqref{eq:Riesz-smooth-1}.

We decompose the difference $R(m+2,n)-R(m,n)$ into three integrals $\mathcal{R}_1(m,n)$, $\mathcal{R}_2(m,n)$, and $\mathcal{R}_3(m,n)$ over the intervals $I_1=(-1,-1+1/(n+1)^2)$, $I_2=[-1+1/(n+1)^2, 1-1/(n+1)^2]$, and $I_3=(1-1/(n+1)^2,1)$. From \eqref{eq:unif-bound-trozos} and Lemma \ref{lem:bound-diff} (note that by hypothesis $m/2\leq n\leq 3m/2$), we have
\[
|\mathcal{R}_1(m,n)|\le C (m+1)^{\beta-1/2}(n+1)^{\beta+1/2}\int_{I_1} (1+x)^\beta\, dx\le \frac{C}{(n+1)^{2}}
\]
and
\[
|\mathcal{R}_3(m,n)|\le C (n+1)^{\alpha+3/2}(m+1)^{\alpha-1/2}\int_{I_3}(1-x)^{\alpha+1/2}\, dx\le \frac{C}{(n+1)^{2}},
\]
which are enough to prove \eqref{eq:Riesz-smooth-1}.

We deal now with the most delicate integral $\mathcal{R}_2(m,n)$.  We recover some notation from the proof of Proposition~\ref{propo:Riesz-size} and denote
\begin{equation*}
\mathcal{J}(m,n)=\int_{I_2}H_{\alpha,\beta}(x)p_n^{(\alpha+1,\beta)}(x)
(p_{m+2}^{(\alpha,\beta)}(x)-p_m^{(\alpha,\beta)}(x))\,d\mu_{\alpha,\beta}(x),
\end{equation*}
and
\[
\mathcal{S}(m,n)=U_{\alpha,\beta}((1-(\cdot))^{1/2}p_n^{(\alpha+1,\beta)},p_{m+2}^{(\alpha,\beta)}-p_m^{(\alpha,\beta)})(x)
\Big|_{x=-1+1/(n+1)^2}^{x=1-1/(n+1)^2}.
\]
By \eqref{eq:R-size}, using that $\lambda_{m+2}^{(\alpha,\beta)}\not= \lambda_n^{(\alpha+1,\beta)}$ and $\lambda_{m}^{(\alpha,\beta)}\not= \lambda_n^{(\alpha+1,\beta)}$, we obtain that
\begin{multline}
\label{eq:R-smooth}
\mathcal{R}_2(m,n)=\frac{S(m+2,n)+J(m+2,n)}{\lambda_{m+2}^{(\alpha,\beta)}-\lambda_n^{(\alpha+1,\beta)}}
-\frac{S(m,n)+J(m,n)}{\lambda_{m}^{(\alpha,\beta)}-\lambda_n^{(\alpha+1,\beta)}}\\
=\frac{\mathcal{S}(m,n)+\mathcal{J}(m,n)}{\lambda_{m+2}^{(\alpha,\beta)}-\lambda_n^{(\alpha+1,\beta)}}
-\frac{2(2m+\alpha+\beta+3)(S(m,n)+J(m,n))}
{(\lambda_{m+2}^{(\alpha,\beta)}-\lambda_n^{(\alpha+1,\beta)})(\lambda_{m}^{(\alpha,\beta)}-\lambda_n^{(\alpha+1,\beta)})}.
\end{multline}
We use \eqref{eq:S} and \eqref{eq:J} to obtain that
\begin{equation}
\label{eq:smooth-aux-1}
\left|\frac{2(2m+\alpha+\beta+3)(S(m,n)+J(m,n))}
{(\lambda_{m+2}^{(\alpha,\beta)}-\lambda_n^{(\alpha+1,\beta)})(\lambda_{m}^{(\alpha,\beta)}-\lambda_n^{(\alpha+1,\beta)})}\right|\le \frac{C}{|n-m|^2}.
\end{equation}

From \eqref{eq:unif-bound-trozos}, \eqref{eq:Jaco-der}, Lemmas \ref{lem:bound-diff} and \ref{lem:bound-diff-der}, we have 
\[
|\mathcal{S}(m,n)|\le C
\]
and hence
\begin{equation}
\label{eq:smooth-aux-2}
\left|\frac{\mathcal{S}(m,n)}{\lambda_{m+2}^{(\alpha,\beta)}-\lambda_n^{(\alpha+1,\beta)}}\right|\le \frac{C}{|n-m|^2}.
\end{equation}

Now, to analyse the term $\mathcal{J}(m,n)$ we will use \eqref{eq:L-product-2}. 
Therefore, taking the notation
\[
\overline{\mathcal{S}}(m,n)=U_{\alpha+1,\beta}\Big(\mathcal{H}_{\alpha,\beta}(p_{m+2}^{(\alpha,\beta)}-p_m^{(\alpha,\beta)}),
p_n^{(\alpha+1,\beta)}\Big)(x)\Big|_{x=-1+1/(n+1)^2}^{x=1-1/(n+1)^2},
\]
where
\[
\mathcal{H}_{\alpha,\beta}(x)=\frac{H_{\alpha,\beta}(x)}{1-x},
\]
\begin{multline*}
T_1(m,n)=\int_{I_2}((1+x)\mathcal{H}_{\alpha,\beta}(x)-2(1-x^2)\mathcal{H}'_{\alpha,\beta}(x))\\(p_{m+2}^{(\alpha,\beta)}-p_m^{(\alpha,\beta)})'(x)p_n^{(\alpha+1,\beta)}(x)\, d\mu_{\alpha+1,\beta}(x),
\end{multline*}
and
\begin{multline*}
T_2(m,n)=\int_{I_2}((1-x^2)\mathcal{H}''_{\alpha,\beta}(x)+(\beta-\alpha-1-(\alpha+\beta+3)x)\mathcal{H}'_{\alpha,\beta}(x))
\\(p_{m+2}^{(\alpha,\beta)}(x)-p_m^{(\alpha,\beta)}(x))
p_n^{(\alpha+1,\beta)}(x)\, d\mu_{\alpha+1,\beta}(x),
\end{multline*}
we have
\begin{multline*}
\lambda_{n}^{(\alpha+1,\beta)}\mathcal{J}(m,n)\\
\begin{aligned}
&=
\int_{I_2}\mathcal{H}_{\alpha,\beta}(x)(p_{m+2}^{(\alpha,\beta)}(x)-p_m^{(\alpha,\beta)}(x))
L^{\alpha+1,\beta}p_n^{(\alpha+1,\beta)}(x)\,d\mu_{\alpha+1,\beta}(x)\\
&=\overline{\mathcal{S}}(m,n)+\int_{I_2}L^{\alpha+1,\beta}(\mathcal{H}_{\alpha,\beta}(p_{m+2}^{(\alpha,\beta)}
-p_m^{(\alpha,\beta)}))(x)p_n^{(\alpha+1,\beta)}(x)\,d\mu_{\alpha+1,\beta}(x)\\
&=\overline{\mathcal{S}}(m,n)+\int_{I_2}H_{\alpha,\beta}(x)L^{\alpha,\beta}(p_{m+2}^{(\alpha,\beta)}-p_m^{(\alpha,\beta)}))(x)
p_n^{(\alpha+1,\beta)}(x)\,d\mu_{\alpha,\beta}(x)\\&\kern25pt +T_1(m,n)-T_2(m,n).
\end{aligned}
\end{multline*}
We use now the identity
\begin{equation}\label{eq_Lpols}
L^{\alpha,\beta}(p_{m+2}^{(\alpha,\beta)}-p_m^{(\alpha,\beta)})(x)
=\lambda_{m+2}^{(\alpha,\beta)}(p_{m+2}^{(\alpha,\beta)}(x)-p_m^{(\alpha,\beta)}(x))+(\lambda_{m+2}^{(\alpha,\beta)}
-\lambda_m^{(\alpha,\beta)})p_m^{(\alpha,\beta)}(x)
\end{equation}
to deduce that
\begin{multline*}
\lambda_{n}^{(\alpha+1,\beta)}\mathcal{J}(m,n)=\overline{\mathcal{S}}(m,n)+\lambda_{m+2}^{(\alpha,\beta)}\mathcal{J}(m,n)\\+2(2m+\alpha+\beta+3)J(m,n) +T_1(m,n)-T_2(m,n)
\end{multline*}
In this way,
\begin{equation}
\label{eq:smooth-aux-3}
\frac{\mathcal{J}(m,n)}{\lambda_{m+2}^{(\alpha,\beta)}-\lambda_n^{(\alpha+1,\beta)}}=
\frac{-\overline{\mathcal{S}}(m,n)-2(2m+\alpha+\beta+3)J(m,n)-T_1(m,n)+T_2(m,n)}{(\lambda_{m+2}^{(\alpha,\beta)}-\lambda_n^{(\alpha+1,\beta)})^2}.
\end{equation}
From \eqref{eq:J}, we deduce the estimate
\[
\left|\frac{2(2m+\alpha+\beta+3)J(m,n)}{(\lambda_{m+2}^{(\alpha,\beta)}-\lambda_n^{(\alpha+1,\beta)})^2}\right|\le \frac{C}{|n-m|^2}.
\]
Then, it suffices to show that
\begin{equation}
\label{eq:aux-prop}
|\overline{\mathcal{S}}(m,n)|+|T_1(m,n)|+|T_2(m,n)|\le C (n+1)^2
\end{equation}
because using \eqref{eq:R-smooth}, \eqref{eq:smooth-aux-1}, \eqref{eq:smooth-aux-2}, and \eqref{eq:smooth-aux-3}, the proof of \eqref{eq:Riesz-smooth-1} for $n>m$ will be completed.

From  \eqref{eq:unif-bound-trozos}, \eqref{eq:Jaco-der}, Lemmas \ref{lem:bound-diff} and \ref{lem:bound-diff-der}, and using the bounds $|\mathcal{H}_{\alpha,\beta}(x)|\leq C(1-x)^{-3/2}$ and $|\mathcal{H}_{\alpha,\beta}'(x)|\leq C(1-x)^{-5/2}$, for $-1<x<1$, we obtain the estimate
\begin{equation*}
|\overline{\mathcal{S}}(m,n)|\le C (n+1)^2.
\end{equation*}

Now we decompose $T_1(m,n)$ and $T_2(m,n)$ according the intervals $V_1=[-1+1/(n+1)^2,-1+1/(m+1)^2)$, $V_2=[-1+1/(m+1)^2,1-1/(m+1)^2]$, and $V_3=(1-1/(m+1)^2,1-1/(n+1)^2]$. Using \eqref{eq:unif-bound-trozos}, Lemma \ref{lem:bound-diff-der}, and the estimate
\[
|(1+x)\mathcal{H}_{\alpha,\beta}(x)-2(1-x^2)\mathcal{H}'_{\alpha,\beta}(x)|\le C (1+x)(1-x)^{-3/2}, \qquad -1<x<1,
\]
for $m/2\leq n\leq 3m/2$ and $\alpha\ge -1/2$ we have
\begin{align*}
|T_1(m,n)|\le &C\left((m+1)^{\beta+3/2}\int_{V_1}(1+x)^{\beta/2+3/4}\, dx\right.\\&
\kern 20pt +(m+1)\int_{V_2}(1+x)^{1/2}(1-x)^{-3/2}\, dx\\&\left.\kern20pt+(m+1)^{\alpha+3/2}\int_{V_3}(1-x)^{\alpha/2-5/4}\, dx\right)\le C(n+1)^2.
\end{align*}
Finally,  by \eqref{eq:unif-bound-trozos}, Lemma \ref{lem:bound-diff}, and the bound
\[
|(1-x^2)\mathcal{H}''_{\alpha,\beta}(x)+2(\beta-\alpha-1-(\alpha+\beta+3)x)\mathcal{H}'_{\alpha,\beta}(x)|\le C (1-x)^{-5/2}, \quad -1<x<1,
\]
we can show that for $m/2\leq n\leq 3m/2$ and $\alpha\ge -1/2$,
\begin{multline*}
|T_2(m,n)|\le C\left((m+1)^{\beta-1/2}\int_{V_1}(1+x)^{\beta/2-1/4}\, dx+\int_{V_2}(1-x)^{-2}\, dx\right.\\\left.+(m+1)^{\alpha-1/2}\int_{V_3}(1-x)^{\alpha/2-9/4}\, dx\right)\le C(n+1)^2,
\end{multline*}
and the proof of \eqref{eq:aux-prop} is completed.

Now we will prove the estimate \eqref{eq:Riesz-smooth-2} for $n<m$.

Again, we decompose the difference $R(m,n+2)-R(m,n)$ into three integrals $\mathcal{R}_{1}'(m,n)$, $\mathcal{R}_{2}'(m,n)$, and $\mathcal{R}_{3}'(m,n)$, over the intervals $I_{1}'=(-1,-1+1/(m+1)^{2})$, $I_{2}'=[-1+1/(m+1)^{2},1-1/(m+1)^{2}]$, $I_{3}'=(1-1/(m+1)^{2},1)$. We use \eqref{eq:unif-bound-trozos} and Lemma~\ref{lem:bound-diff} and we deduce the estimates
\[
|\mathcal{R}_{1}'(m,n)|\le C (m+1)^{\beta+1/2}(n+1)^{\beta-1/2}\int_{I'_1} (1+x)^\beta\, dx\le \frac{C}{(m+1)^{2}}
\]
and
\[
|\mathcal{R}_{3}'(m,n)|\le C (m+1)^{\alpha+1/2}(n+1)^{\alpha+1/2}\int_{I'_3}(1-x)^{\alpha+1/2}\, dx\le \frac{C}{(m+1)^{2}}.
\]

We analyse now the term $\mathcal{R}_{2}'(m,n)$. By \eqref{eq:R'-size}, using that $\lambda_{n+2}^{(\alpha+1,\beta)}\not= \lambda_m^{(\alpha,\beta)}$ and $\lambda_{n}^{(\alpha+1,\beta)}\not= \lambda_m^{(\alpha,\beta)}$,  it is possible to prove the identity
\begin{multline*}
%\label{eq:R-smoothprime}
\mathcal{R}'_2(m,n)=\frac{S'(m,n+2)-J'(m,n+2)}{\lambda_{n+2}^{(\alpha+1,\beta)}-\lambda_m^{(\alpha,\beta)}}
-\frac{S'(m,n)-J'(m,n)}{\lambda_{n}^{(\alpha+1,\beta)}-\lambda_m^{(\alpha,\beta)}}\\
=\frac{\mathcal{S}'(m,n)-\mathcal{J}'(m,n)}{\lambda_{n+2}^{(\alpha+1,\beta)}-\lambda_m^{(\alpha,\beta)}}
-\frac{2(2n+\alpha+\beta+4)(S'(m,n)-J'(m,n))}
{(\lambda_{n+2}^{(\alpha+1,\beta)}-\lambda_m^{(\alpha,\beta)})(\lambda_{n}^{(\alpha+1,\beta)}-\lambda_m^{(\alpha,\beta)})},
\end{multline*}
where
\begin{equation*}
\mathcal{J}'(m,n)=\int_{I'_2}H_{\alpha,\beta}(x)p_m^{(\alpha,\beta)}(x)
(p_{n+2}^{(\alpha+1,\beta)}(x)-p_n^{(\alpha+1,\beta)}(x))\,d\mu_{\alpha,\beta}(x)
\end{equation*}
and
\[
\mathcal{S}'(m,n)=U_{\alpha+1,\beta}\Big((1-(\cdot))^{-1/2}p_{m}^{(\alpha,\beta)},
p_{n+2}^{(\alpha+1,\beta)}-p_n^{(\alpha+1,\beta)}\Big)(x)\Big|_{x=-1+1/(m+1)^2}^{x=1-1/(m+1)^2}.
\]
By \eqref{eq:Sprim} and \eqref{eq:J'} we obtain that
\begin{equation*}
\left|\frac{2(2n+\alpha+\beta+4)(S'(m,n)-J'(m,n))}
{(\lambda_{n+2}^{(\alpha+1,\beta)}-\lambda_m^{(\alpha,\beta)})(\lambda_{n}^{(\alpha+1,\beta)}-\lambda_m^{(\alpha,\beta)})}\right|\le \frac{C}{|n-m|^2}.
\end{equation*}
Now, from \eqref{eq:unif-bound-trozos}, \eqref{eq:Jaco-der}, and Lemmas \ref{lem:bound-diff} and \ref{lem:bound-diff-der}, we deduce the estimate
\begin{equation*}
|\mathcal{S}'(m,n)| \leq C
\end{equation*}
and therefore
\[
\left|\frac{\mathcal{S}'(m,n)}{\lambda_{n+2}^{(\alpha+1,\beta)}-\lambda_m^{(\alpha,\beta)}}\right|\le \frac{C}{|n-m|^2}.
\]
We deal now with the term $\mathcal{J}'(m,n)$. By using \eqref{eq:L-product} we have that
\begin{multline*}
\lambda_{m}^{(\alpha,\beta)}\mathcal{J}'(m,n)\\
\begin{aligned}
&=
\int_{I'_2}H_{\alpha,\beta}(x)(p_{n+2}^{(\alpha+1,\beta)}(x)-p_n^{(\alpha+1,\beta)}(x))
L^{\alpha,\beta}p_m^{(\alpha,\beta)}(x)\,d\mu_{\alpha,\beta}(x)\\
&=\overline{\mathcal{S}}'(m,n)+\int_{I'_2}L^{\alpha,\beta}(H_{\alpha,\beta}(p_{n+2}^{(\alpha+1,\beta)}-p_n^{(\alpha+1,\beta)}))(x)p_m^{(\alpha,\beta)}(x)\,d\mu_{\alpha,\beta}(x)\\
&=\overline{\mathcal{S}}'(m,n)+\int_{I'_2}H_{\alpha,\beta}(x)L^{\alpha+1,\beta}(p_{n+2}^{(\alpha+1,\beta)}-p_n^{(\alpha+1,\beta)}))(x)
p_m^{(\alpha,\beta)}(x)\,d\mu_{\alpha,\beta}(x)\\&\kern25pt -T'_1(m,n)-T'_2(m,n),
\end{aligned}
\end{multline*}
where
\[
\overline{\mathcal{S}}'(m,n)=U_{\alpha,\beta}\Big(H_{\alpha,\beta}(p_{n+2}^{(\alpha+1,\beta)}-p_n^{(\alpha+1,\beta)}),
p_m^{(\alpha,\beta)}\Big)(x)\Big|_{x=-1+1/(m+1)^2}^{x=1-1/(m+1)^2},
\]
\begin{multline*}
T_1'(m,n)=\int_{I_2'}((1+x)H_{\alpha,\beta}(x)+2(1-x^2)H'_{\alpha,\beta}(x))\\(p_{n+2}^{(\alpha+1,\beta)}-p_n^{(\alpha+1,\beta)})'(x)
p_m^{(\alpha,\beta)}(x)\, d\mu_{\alpha,\beta}(x),
\end{multline*}
and
\begin{multline*}
T_2'(m,n)=\int_{I_2'}((1-x^2)H''_{\alpha,\beta}(x)+(\beta-\alpha-(\alpha+\beta+2)x)H'_{\alpha,\beta}(x))
\\(p_{n+2}^{(\alpha+1,\beta)}(x)-p_n^{(\alpha+1,\beta)}(x))
p_m^{(\alpha,\beta)}(x)\, d\mu_{\alpha,\beta}(x).
\end{multline*}
Applying \eqref{eq_Lpols} we get
\begin{multline*}
\frac{\mathcal{J}'(m,n)}{\lambda_{n+2}^{(\alpha+1,\beta)}-\lambda_m^{(\alpha,\beta)}}\\=
\frac{-\overline{\mathcal{S}}'(m,n)-2(2n+\alpha+\beta+4)J'(m,n)+T'_1(m,n)+T'_2(m,n)}{(\lambda_{n+2}^{(\alpha+1,\beta)}-\lambda_m^{(\alpha,\beta)})^2}.
\end{multline*}
From \eqref{eq:J'}, it is easy to show that
\[
\left|\frac{2(2n+\alpha+\beta+4)J'(m,n)}{(\lambda_{n+2}^{(\alpha+1,\beta)}-\lambda_m^{(\alpha,\beta)})^2}\right|\le \frac{C}{|n-m|^2}.
\]
To estimate the term $\overline{\mathcal{S}}'(m,n)$ we use \eqref{eq:unif-bound-trozos}, \eqref{eq:Jaco-der}, Lemmas \ref{lem:bound-diff} and \ref{lem:bound-diff-der}, and the estimates $|H_{\alpha,\beta}(x)|\leq C(1-x)^{-1/2}$, $|H_{\alpha,\beta}'(x)|\leq C(1-x)^{-3/2}$, for $-1<x<1$. Then,
\begin{equation*}
|\overline{\mathcal{S}}'(m,n)| \leq C (m+1)^{2}
\end{equation*}
and
\[
\left|\frac{\overline{\mathcal{S}}'(m,n)}{(\lambda_{n+2}^{(\alpha+1,\beta)}-\lambda_m^{(\alpha,\beta)})^2}\right|\le \frac{C}{|n-m|^2}.
\]
Finally, we estimate the terms $T_{1}'(m,n)$ and $T_{2}'(m,n)$. We split both of them according to the intervals $V'_1=[-1+1/(m+1)^2,-1+1/(n+1)^2)$, $V'_2=[-1+1/(n+1)^2,1-1/(n+1)^2]$, and $V'_3=(1-1/(n+1)^2,1-1/(m+1)^2]$. Thus, using \eqref{eq:unif-bound-trozos}, Lemma \ref{lem:bound-diff-der}, and the estimate
\[
|(1+x) H_{\alpha,\beta}(x)-2(1-x^2) H'_{\alpha,\beta}(x)|\le C (1+x)(1-x)^{-1/2}, \qquad -1<x<1,
\]
for $m/2\leq n\leq 3m/2$ and $\alpha\ge -1/2$ we have
\begin{align*}
|T'_1(m,n)|&\le C\left((n+1)^{\beta+3/2}\int_{V'_1}(1+x)^{\beta/2+3/4}\, dx\right.
\\&\kern20pt+(n+1)\int_{V'_2}(1+x)^{1/2}(1-x)^{-3/2}\, dx
\\&\kern20pt\left.+(n+1)^{\alpha+5/2}\int_{V'_3}(1-x)^{\alpha/2-3/4}\, dx\right)\le C(m+1)^2.
\end{align*}
Moreover, by \eqref{eq:unif-bound-trozos}, Lemma \ref{lem:bound-diff}, and the estimate
\[
|(1-x^2) H''_{\alpha,\beta}(x)+2(\beta-\alpha-(\alpha+\beta+2)x) H'_{\alpha,\beta}(x)|\le C (1-x)^{-3/2}, \quad -1<x<1,
\]
we conclude that for $m/2\leq n\leq 3m/2$ and $\alpha,\beta \ge -1/2$,
\begin{multline*}
|T'_2(m,n)|\le C\left((n+1)^{\beta-1/2}\int_{V'_1}(1+x)^{\beta/2-1/4}\, dx+\int_{V'_2}(1-x)^{-2}\, dx\right.\\\left.+(n+1)^{\alpha+1/2}\int_{V'_3}(1-x)^{\alpha/2-7/4}\, dx\right)\le C(m+1)^2
\end{multline*}
and the proof of the proposition is finished.
\end{proof}

%%%%%%%%%%%%%%%%%%%%%%%%%%%%%%%%%%%%%%%%%%%%%%%%%%%%%%%%%%%%%%%%%%%%%%%%%%%%%%%%%%%%%%%%%
\section{Proofs of Lemmas \ref{lem:bound-diff} and \ref{lem:bound-diff-der}}
%%%%%%%%%%%%%%%%%%%%%%%%%%%%%%%%%%%%%%%%%%%%%%%%%%%%%%%%%%%%%%%%%%%%%%%%%%%%%%%%%%%%%%%%%
\begin{proof}[Proof of Lemma \ref{lem:bound-diff}]
First of all, note that it is enough to proof that
\begin{equation}
\label{eq:bound-diff-2}
|p_{n+2}^{(a,b)}(x)-p_{n}^{(a,b)}(x)|\le C
\begin{cases}
(n+1)^{a-1/2}, & 1-1/(n+1)^2<x<1,\\
(1-x)^{-a/2+1/4}, & 0\le x\leq1-1/(n+1)^2,
\end{cases}
\end{equation}
because the bound for $-1<x<0$ is obtained immediately from latter by using the relation $P_n^{(a,b)}(-z)=(-1)^nP_n^{(b,a)}(z)$, $-1<z<1$.

It is straightforward to check that
\begin{equation}
\label{eq:lem-1}
p_{n+2}^{(a,b)}(x)-p_n^{(a,b)}(x)=\left(\frac{w_{n+2}^{(a,b)}}{w_n^{(a,b)}}-1\right)p_n^{(a,b)}(x)+w_{n+2}^{(a,b)}(P_{n+2}^{(a,b)}(x)-P_{n}^{(a,b)}(x)).
\end{equation}
From the estimate
\begin{equation*}
%\label{eq:asym-ratio}
\left|\frac{w_{n+2}^{(a,b)}}{w_n^{(a,b)}}-1\right| \leq \frac{C}{n+1}
\end{equation*}
and the uniform estimate \eqref{eq:unif-bound-trozos} (note that if $0\le x<1-1/(n+1)^2$, then $\frac{1}{n+1}\le (1-x)^{1/2} $), we conclude that
\begin{equation}
\label{eq:lem-2}
\left|\left(\frac{w_{n+2}^{(a,b)}}{w_n^{(a,b)}}-1\right)p_n^{(a,b)}(x)\right|
\\\le C\begin{cases}
(n+1)^{a-1/2}, & 1-1/(n+1)^2<x<1,\\
(1-x)^{-a/2+1/4}, & 0\le x\leq1-1/(n+1)^2.
\end{cases}
\end{equation}

Now we apply the following identity (obtained from \cite[18.9.6]{NIST})
\[
-\frac{2n+a+b+2}{2}(1-z)P_n^{(a+1,b)}(z)+aP_n^{(a,b)}(z)=(n+1)(P_{n+1}^{(a,b)}(z)-P_{n}^{(a,b)}(z)),
\]
$0\leq z<1$, $a,b>-1$, to deduce the estimate
\begin{multline*}
w_{n+2}^{(a,b)}|P_{n+1}^{(a,b)}(x)-P_{n}^{(a,b)}(x)|\\
\le \frac{(2n+a+b+2)}{2(n+1)}(1-x)\frac{w_{n+2}^{(a,b)}}{w_{n}^{(a+1,b)}}|p_n^{(a+1,b)}(x)|+\frac{|a|}{n+1}\frac{w_{n+2}^{(a,b)}}{w_{n}^{(a,b)}}|p_n^{(a,b)}(x)|.
\end{multline*}
Therefore, the uniform estimate \eqref{eq:unif-bound-trozos} implies that
\begin{multline}
\label{eq:lem-3}
w_{n+2}^{(a,b)}|P_{n+1}^{(a,b)}(x)-P_{n}^{(a,b)}(x)|\\\le  C\begin{cases}
(n+1)^{a-1/2}, & 1-1/(n+1)^2<x<1,\\
(1-x)^{-a/2+1/4}, & 0\le x\leq1-1/(n+1)^2,
\end{cases}
\end{multline}
and the same bound holds for the term $w_{n+2}^{(a,b)}|P_{n+2}^{(a,b)}(x)-P_{n+1}^{(a,b)}(x)|$.
Then, \eqref{eq:bound-diff-2} follows from \eqref{eq:lem-1}, \eqref{eq:lem-2}, and \eqref{eq:lem-3}.
\end{proof}

\begin{proof}[Proof of Lemma \ref{lem:bound-diff-der}]
First, we assume that $n\neq 0$. By \eqref{eq:Jaco-der}, it is easy to check that
\begin{multline*}
(p_{n+2}^{(a,b)}-p_{n}^{(a,b)})'(x)=\frac{w_{n+2}^{(a,b)}}{w_{n+1}^{(a+1,b+1)}}\frac{n+a+b+3}{2}(p_{n+1}^{(a+1,b+1)}(x)-p_{n-1}^{(a+1,b+1)}(x))\\+
\left(\frac{w_{n+2}^{(a,b)}}{w_{n+1}^{(a+1,b+1)}}\frac{n+a+b+3}{2}-\frac{w_{n}^{(a,b)}}{w_{n-1}^{(a+1,b+1)}}\frac{n+a+b+1}{2}\right)p_{n-1}^{(a+1,b+1)}(x).
\end{multline*}
Then, using that
\[
\left|\frac{w_{n+2}^{(a,b)}}{w_{n+1}^{(a+1,b+1)}}\frac{n+a+b+3}{2}-\frac{w_{n}^{(a,b)}}{w_{n-1}^{(a+1,b+1)}}\frac{n+a+b+1}{2}\right|\leq C,
\]
and the estimate \eqref{eq:unif-bound-trozos} and Lemma \ref{lem:bound-diff}, the result follows. If $n=0$, we proceed in a similar way using \eqref{eq:Jaco-der0} instead of \eqref{eq:Jaco-der}.
\end{proof}

%%%%%%%%%%%%%%%%%%%%%%%%%%%%%%%%%%%%%%%%%%%%%%%%%%%%%%%%%%%%%%%

%%%%%%%%%%%%%%%%%%%%%%%%%%%%%%%%%%%%%%%%%%%%%%%%%%%%%%%%%%%%%%%%%%%%
\end{document}